\documentclass{amsart}

\usepackage{amsfonts}
\usepackage{amsmath}
\usepackage{amssymb}
\usepackage{amsthm}

\makeatletter
\renewcommand\@seccntformat[1]{\csname the#1\endcsname.\quad}

\newtheorem{theorem}{Theorem}[section]

\newtheorem{corollary}[theorem]{Corollary}

\newtheorem{proposition}[theorem]{Proposition}
\newtheorem{remark}[theorem]{Remark}

\newcommand{\norm}[3]{\ensuremath{\left\Vert#1\right\Vert_{#2}^{#3}}}
\newcommand{\abs}[3]{\ensuremath{\left\vert#1\right\vert_{#2}^{#3}}}
\DeclareMathOperator{\supp}{supp}

\begin{document}

\author[M. Ciesielski and G. Lewicki]{Maciej Ciesielski and Grzegorz Lewicki}

\title[ Substochastic operators in symmetric spaces]{Substochastic operators in symmetric spaces}

\begin{abstract}
First, we solve a crucial problem under which conditions increasing uniform $K$-monotonicity is equivalent to lower locally uniform $K$-monotonicity. Next, we investigate properties of substochastic operators on $L^1+L^\infty$ with applications. Namely, we show that a countable infinite combination of substochastic operators is also substochastic. Using $K$-monotonicity properties, we prove several theorems devoted to the convergence of the sequence of substochastic operators in the norm of a symmetric space $E$ under addition assumption on $E$. In our final discussion we focus on compactness of admissible operators for arbitrary Banach couples.
\end{abstract}

\maketitle


{\small \underline{2000 Mathematics Subjects Classification: 46E30, 46B20, 46B42.}\hspace{1.5cm}\
\ \ \quad\ \quad  }\smallskip\ 

{\small \underline{Key Words and Phrases:}\hspace{0.15in} Symmetric spaces, Banach couples, substochastic operators, admissible operators, uniform $K$-monotonicity.}


\section{Introduction}
Substochastic operators have been researched and applied intensively in many fields of mathematics since the early 20th century. Namely, the first interesting result concerning a substochastic operator appeared in 1929 (see \cite{HrLiPo29,MarOlkArn}), where Hardy, Littlewood and P\'olya presented a connection between existence of double stochastic matrix and a majorization one vector by another with respect to the Hardy-Littlewood-P\'olya relation $\prec$. Next, in 60-ties Calder\'on and Ryff showed that for any two non-negative measurable functions $f,g$ such that $f\prec{g}$ and $f\in{L^1+L^\infty}$ there exists a substochastic operator $T$ such that $Tg=f$ a.e. (see \cite{Ryff65,Cald66}). It is worth mentioning that this result was a crucial key in the Calder\'on's proof to establish an equivalence criterion for the exact interpolation space between a Banach couple $(L^1,L^\infty)$ \cite{Cald66}. It is necessary to recall some investigations devoted to relationships between substochastic operators and semigroups, among other used to explore properties of Markov processes or Markov chains, which have a substantial history dating back also to the 1960s (see \cite{Hor66,Jai66,Jam67}). The next motivation for this paper we find in \cite{ChDSS,Cies-UKM-Rot,Cies-LLUKM,CieLewUKM}, where authors presented equivalent conditions for $K$-monotonicity properties with applications in symmetric spaces. In the spirit of the previous investigations we give a complete answer for the essential problem about a relation between uniform $K$-monotonicity properties. Namely, we provide complete criteria for equivalence of increasing uniform $K$-monotonicity and lower local uniform $K$-monotonicity in symmetric spaces.
The second goal of this paper is to study a relationship between convergence of substochastic operators and $K$-monotonicity properties in symmetric spaces. First, we prove that a countable infinite combination of substochastic operators is substochastic and discuss some basic examples of 
substochastic operators in symmetric spaces. We start our investigation assuming the weakest property lower locally uniform $K$-monotonicity in a symmetric space $E$. Under additional assumption of the symmetric space $E$, we establish a norm convergence of a sequence of specific substochastic operators given by a countable infinite sum of the integral averaging on positive finite measurable sets. Next, letting the stronger property decreasing uniform $K$-monotonicity in a symmetric space $E$ we research norm and point convergence of the sequence of substochastic operators given in the descending order with respect to the Hardy-Littlewood-P\'olya relation $\prec$. In view of the previous result we also research norm convergence of $A^n$ a geometric sequence generated by a fixed substochastic operator $A$ in a symmetric space $E$. We also focus our attention on the convergence of a sequence of substochastic operator given in the ascending order for the partial order $\leq$ in a symmetric space $E$, equipped with uniformly $K$-monotone norm.
Finally, we show compactness of the admissible operator between Banach couples under additional assumptions of the interpolation spaces.   
 
\section{Preliminaries}

Now we recall basic notation and definition. Let $\mathbb{R}$, $\mathbb{R}^+$ and $\mathbb{N}$ be the sets of reals, nonnegative reals and positive integers, respectively. In a Banach space $(X,\norm{\cdot}{X}{})$ we denote the unit sphere and the closed unit ball by $S(X)$ and $B(X))$, respectively. A function $\phi:\mathbb{R}^+\rightarrow\mathbb{R}^+$ is called \textit{quasiconcave} if $\phi(t)$ is increasing and $\phi(t)/t$ is decreasing on $\mathbb{R}^+$ and also $\phi(t)=0\Leftrightarrow{t=0}$. We denote as usual by $\mu$ the Lebesgue measure on $I=[0,\alpha)$, where $\alpha =1$ or $\alpha =\infty$, and by $L^{0}$ the set of all (equivalence classes of) extended real valued Lebesgue measurable functions on $I$. Let us use the notation $A^{c}=I\backslash A$ for any measurable set $A$. By a \textit{Banach function space} (or a \textit{K\"othe space}) we mean a Banach lattice $(E,\Vert \cdot \Vert _{E})$ that is a sublattice of $L^{0}$ and possesses the following conditions
 
\begin{itemize}
\item[(1)] If $x\in L^0$, $y\in E$ and $|x|\leq|y|$ a.e., then $x\in E$ and $%
\|x\|_E\leq\|y\|_E$.
\item[(2)] There exists a strictly positive $x\in E$.
\end{itemize}
To simplify the notation in the whole paper we use the symbol $E^{+}={\{x \in E:x \ge 0\}}$. An element $x\in E$ is called a \textit{point of order continuity} if for any $(x_{n})\subset{}E^+$ such that $x_{n}\leq \left\vert x\right\vert 
$ and $x_{n}\rightarrow 0$ a.e. we get $\left\Vert x_{n}\right\Vert
_{E}\rightarrow 0.$ A Banach function space $E$ is said to be \textit{order continuous 
}(shortly $E\in \left( OC\right) $) if any element $x\in{}E$ is a point of order continuity (see \cite{LinTza}). We say that a Banach function space $E$ has the \textit{Fatou property} if for any $\left( x_{n}\right)\subset{}E^+$, $\sup_{n\in \mathbb{N}}\Vert x_{n}\Vert
_{E}<\infty$ and $x_{n}\uparrow x\in L^{0}$, then we have $x\in E$ and $\Vert x_{n}\Vert _{E}\uparrow\Vert x\Vert
_{E}$. Unless it is said otherwise, in the whole article it is considered that $E$ has the Fatou property. For any function $x\in L^{0}$ we denote the \textit{distribution function} by 
\begin{equation*}
d_{x}(\lambda) =\mu\left\{ s\in [ 0,\alpha) :\left\vert x\left(s\right) \right\vert >\lambda \right\},\qquad\lambda \geq 0.
\end{equation*}
and also the \textit{decreasing rearrangement} of $x$ by 
\begin{equation*}
x^{\ast }\left( t\right) =\inf \left\{ \lambda >0:d_{x}\left( \lambda
\right) \leq t\right\}, \text{ \ \ } t\geq 0.
\end{equation*}
The following notation is used $x^{*}(\infty)=\lim_{t\rightarrow\infty}x^{*}(t)$ if $\alpha=\infty$ and $x^*(\infty)=0$ if $\alpha=1$. For any function $x\in L^{0}$ we define the \textit{maximal function} of $x^{\ast }$ by 
\begin{equation*}
x^{\ast \ast }(t)=\frac{1}{t}\int_{0}^{t}x^{\ast }(s)ds.
\end{equation*}
For any element $x\in L^{0}$, it is well known that $x^{\ast }\leq x^{\ast \ast },$ $x^{\ast \ast }$ is decreasing, continuous and subadditive. For more details of $d_{x}$, $x^{\ast }$ and $x^{\ast \ast }$ the reader is referred to see \cite{BS, KPS}. Let us mention that two functions $x,y\in{L^0}$ are said to be \textit{equimeasurable} (shortly $x\sim y$) if $d_x=d_y$. A Banach function space $(E,\Vert \cdot \Vert_{E}) $ is called \textit{symmetric} or \textit{rearrangement invariant} (r.i. for short) if whenever $x\in L^{0}$ and $y\in E$ such that $x \sim y,$ then $x\in E$ and $\Vert x\Vert_{E}=\Vert y\Vert _{E}$. For a symmetric space $E$ we define $\phi_{E}$ its \textit{fundamental function} given by $\phi_{E}(t)=\Vert\chi_{(0,t)}\Vert_{E}$ for any $t\in [0,\alpha)$ (see \cite{BS}). For any two functions $x,y\in{}L^{1}+L^{\infty }$ the \textit{Hardy-Littlewood-P\'olya relation} $\prec$ is given by 
\begin{equation*}
x\prec y\Leftrightarrow x^{\ast \ast }(t)\leq y^{\ast \ast }(t)\text{ for
all }t>0.\text{ }
\end{equation*}
A symmetric space $E$ is called $K$-\textit{monotone} (shortly $E\in(KM)$)
if for any $x\in L^{1}+L^{\infty}$ and $y\in E$ with $x\prec y,$ then $x\in E$ and $\Vert x\Vert_{E}\leq \Vert y\Vert _{E}.$ It is commonly known that a symmetric space $E$ is $K$-monotone if and only if $E$ is exact interpolation space between $L^{1}$ and $L^{\infty }.$ It is worth mentioning that a symmetric space $E$ equipped with an order continuous norm or with the Fatou property is $K$-monotone (see \cite{KPS}).
Given $x\in{E}$ is called a \textit{point of lower $K$-monotonicity} of $E$ for short an $LKM$ \textit{point} (resp. a \textit{point of upper $K$-monotonicity} of $E$ shortly an $UKM$ \textit{point}) if for any $y\in{E}$, $y^*\neq{x^*}$ with $y\prec{x}$ (resp. $x\prec{y}$), then $\norm{y}{E}{}<\norm{x}{E}{}$ (resp. $\norm{x}{E}{}<\norm{y}{E}{}$). Let us mention that given a symmetric space $E$ is called \textit{strictly $K$-monotone} (shortly $E\in(SKM)$) if any element of $E$ is an $LKM$ point or equivalently any element of $E$ is a $UKM$ point.

 Given $x\in{E}$ is said to be a \textit{point of lower local uniform $K$-monotonicity} of $E$, shortly an $LLUKM$ point, if for any $(x_n)\subset{E}$ with $x_n\prec{x}$ for all $n\in\mathbb{N}$ and $\norm{x_n}{E}{}\rightarrow\norm{x}{E}{}$, then $\norm{x^*-x_n^{*}}{E}{}\rightarrow{0}$.

 A symmetric space $E$ is called \textit{Lower locally uniformly $K$-monotone}, shortly $E\in(LLUKM)$, whenever any element $x\in{}E$ is a $LLUKM$ point.
 
 We refer the reader for more information to see \cite{ChDSS,Cies-JAT,Cies-geom,CieKolPluSKM}.
In \cite{CieLewUKM}, authors introduced some new notions which are a generalization of uniform monotonicity properties in symmetric spaces. The generalization was obtained by replacing a partial order $\leq$ by a weaker relation $\prec$, in definition of monotonicity properties. It is worth mentioning that the new properties characterizes a completely different geometric structure of symmetric spaces than monotonicity properties. A symmetric space $E$ is said to be \textit{uniformly $K$-monotone}, shortly $E\in(UKM)$, if  for any $(x_n),(y_n)\subset{E}$ such that $x_n\prec{y_n}$ for all $n\in\mathbb{N}$ and $\lim_{n\rightarrow\infty}\norm{x_n}{E}{}=\lim_{n\rightarrow\infty}\norm{y_n}{E}{}<\infty$ we have $\norm{x_n^*-y_n^*}{E}{}\rightarrow{0}$.
A symmetric space $E$ is called \textit{decreasing (resp. increasing) uniformly $K$-monotone}, shortly $E\in(DUKM)$ (resp. shortly $E\in(IUKM)$), if  for any $(x_n),(y_n)\subset{E}$ such that $x_{n+1}\prec{}x_n\prec{y_n}$ for all $n\in\mathbb{N}$ and $\lim_{n\rightarrow\infty}\norm{x_n}{E}{}=\lim_{n\rightarrow\infty}\norm{y_n}{E}{}<\infty$ (resp. $x_n\prec{y_n}\prec{y_{n+1}}$ for every $n\in\mathbb{N}$ and $\lim_{n\rightarrow\infty}\norm{x_n}{E}{}=\lim_{n\rightarrow\infty}\norm{y_n}{E}{}<\infty$), we have $\norm{x_n^*-y_n^*}{E}{}\rightarrow{0}$.
 
An operator $T$ from a Banach function space $(X,\norm{\cdot}{X}{})$ into a Banach function space $(Y,\norm{\cdot}{Y}{})$ is called a \textit{positive contraction} if its norm is at most $1$ and it satisfies the property that $T(x)\geq{0}$ whenever $x\geq{0}$.  An admissible operator for a Banach couple $(L^1,L^\infty)$ is said to be a \textit{substochastic operator} whenever it is positive contraction on $L^1$ and on $L^\infty$. In addition, on the space $L^1+L^\infty$ we may employ an equivalent characterization of substochastic operators, i.e. $T$ is substochastic if and only if $T(x)\prec{x}$ for all $x\in L^1+L^\infty$ (see \cite{BS}). 


Unless we say otherwise, we assume in the whole paper that two substochastic operators $T$ and $S$ satisfies $T\leq S$ (resp. $T\prec S$) on $L^1+L^\infty$, whenever for any $x\in{L^1+L^\infty}$ we have $Tx\leq Sx$ (resp. $Tx\prec Sx$). 	
For any quasiconcave function $\phi$ on $I$ the Marcinkiewicz function space $M_{\phi}^{(*)}$ (resp. $M_{\phi}$) is a subspace of $L^0$ such that
$$\norm{x}{M_{\phi}^{(*)}}{}=\sup_{t>0}\{x^*(t)\phi(t)\}<\infty$$
$${\left(\textnormal{resp. }\norm{x}{M_{\phi}}{}=\sup_{t>0}\{x^{**}(t)\phi(t)\}<\infty\right).}$$
It is well known that $M_{\phi}\overset{1}{\hookrightarrow}M_{\phi}^{(*)}$ and also the Marcinkiewicz space $M_{\phi}^{(*)}$ (resp. $M_{\phi}$) is a r.i. quasi-Banach function space (r.i. Banach function space) with the fundamental function $\phi$ on $I$. Let us recall the common fact that $E\overset{1}{\hookrightarrow}M_{\phi}$ for any symmetric space $E$ with the fundamental function $\phi$ (for more details see \cite{BS,KPS}). 

\section{Increasing uniform $K$-monotonicity}

\begin{theorem}\label{thm:full:descr:IUKM}
	Let $E$ be a symmetric space and let $\phi_E$ be the fundamental function of $E$. Then, $E$ is increasing uniformly $K$-monotone if and only if $E$ is lower locally uniformly $K$-monotone and $$\lim_{t\rightarrow{0^+}}\frac{t}{\phi_E(t)}=0.$$
\end{theorem}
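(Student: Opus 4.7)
The equivalence will be established in two directions. The implication (IUKM) $\Rightarrow$ (LLUKM) is immediate: for any $x\in E$ and any sequence $(x_n)\subset E$ with $x_n\prec x$ and $\|x_n\|_E\to\|x\|_E$, taking $y_n\equiv x$ in the definition of (IUKM) yields $\|x_n^*-x^*\|_E\to 0$. For the necessity of the limit condition, I argue by contraposition. If $\lim_{t\to 0^+}t/\phi_E(t)=c>0$, then monotonicity of $\phi_E(t)/t$ gives $\phi_E(t)\leq t/c$ throughout $I$. The plan is to take $y_n=n\chi_{[0,1/n)}$, for which $y_n^{**}(t)=\min\{n,1/t\}$ is pointwise increasing in $n$ (hence $y_n\prec y_{n+1}$) with $\|y_n\|_E=n\phi_E(1/n)\to 1/c$, and to choose $x_n$ with a suitably ``spread out'' rearrangement satisfying $x_n\prec y_n$ and $\|x_n\|_E\to 1/c$, so that $\|x_n^*-y_n^*\|_E$ stays of order $(n-O(1))\phi_E(1/n)\to 1/c>0$ (with a minor adjustment in the finite measure case $\alpha=1$), contradicting (IUKM).

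For the reverse implication, assume (LLUKM) and $\lim_{t\to 0^+}t/\phi_E(t)=0$, and let $(x_n),(y_n)\subset E$ satisfy $x_n\prec y_n\prec y_{n+1}$ with $\lim\|x_n\|_E=\lim\|y_n\|_E=L$. The key idea is to build a common $\prec$-majorant $y\in E$ for the whole sequence $(y_n)$ with $\|y\|_E=L$, and then invoke (LLUKM) twice. Since $(y_n^{**}(t))_n$ is pointwise nondecreasing and bounded by $\|y_n\|_E/\phi_E(t)\leq C/\phi_E(t)$ via $E\hookrightarrow M_{\phi_E}$, the pointwise limit $h(t):=\lim_n y_n^{**}(t)$ is a decreasing function on $(0,\alpha)$, and $th(t)=\lim_n\int_0^t y_n^*$ is concave increasing. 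The hypothesis $t/\phi_E(t)\to 0$ forces $th(t)\leq Ct/\phi_E(t)\to 0$ as $t\to 0^+$, so $y^*:=(th(\cdot))'$ is a legitimate decreasing rearrangement, defining $y\in L^1+L^\infty$ with $y^{**}=h$ and $y_n\prec y$ for all $n$. The identity $\|y\|_E=L$ then follows from the $K$-monotone dual representation of $\|\cdot\|_E$ (available since $E$ is $K$-monotone with the Fatou property): for any decreasing $g$ in the positive cone of $B(E')$, integration by parts combined with monotone convergence applied to $F_n(t)=ty_n^{**}(t)\uparrow ty^{**}(t)$ gives $\int y^*g=\lim_n\int y_n^*g$, whence $\|y\|_E=\sup_g\int y^*g=\lim_n\|y_n\|_E=L$. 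Finally, (LLUKM) applied to $(y_n)$ against $y$ yields $\|y_n^*-y^*\|_E\to 0$, and by transitivity of $\prec$ we have $x_n\prec y$ with $\|x_n\|_E\to\|y\|_E$, so another application of (LLUKM) gives $\|x_n^*-y^*\|_E\to 0$; the triangle inequality then forces $\|x_n^*-y_n^*\|_E\to 0$.

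The main obstacle is manufacturing the $\prec$-limit $y$ in the reverse direction. The hypothesis $t/\phi_E(t)\to 0$ enters decisively in guaranteeing $th(t)\to 0$ as $t\to 0^+$, which is precisely the vanishing condition certifying $y^*$ as an honest decreasing rearrangement of some element of $L^1+L^\infty$. Without it, $h$ may grow like $1/\phi_E(t)$ at the origin (as happens whenever $t/\phi_E(t)$ stays bounded below, since then the normalized fundamental indicators produce a pointwise limit of the form $c/t$ which is no $y^{**}$), and the strategy of reducing (IUKM) to (LLUKM) via a fixed majorant collapses, matching exactly what the counterexample in the forward direction exploits.
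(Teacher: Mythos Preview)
Your plan is correct and follows the same architecture as the paper: in the reverse direction you build a single $\prec$-majorant $y$ for the whole sequence $(y_n)$ with $\|y\|_E=L$, then invoke (LLUKM) twice and finish by the triangle inequality; in the forward direction (LLUKM) is immediate and the limit condition is handled by a counterexample built from dilates of indicator functions.

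The implementations differ in two places, and it is worth recording what each buys. First, to produce the majorant, the paper extracts an a.e.\ convergent subsequence of $(y_n^*)$ via Helly's selection theorem and then uses the hypothesis $t/\phi_E(t)\to 0$ to upgrade this to convergence of $y_n^{**}(t)$; you instead take the monotone pointwise limit $h=\lim_n y_n^{**}$ directly and recover $y^*$ as the right derivative of the concave function $t\mapsto th(t)$, using $t/\phi_E(t)\to 0$ only to secure $th(t)\to 0$ at the origin. Your route is cleaner in that it never passes to subsequences. Second, for $\|y\|_E\le L$ the paper simply applies Fatou's lemma to the a.e.\ limit $y_n^*\to y_0^*$, whereas you use the K\"othe biduality $E=E''$ together with the integration-by-parts/Fubini identity $\int y^*g=\int F\,d(-g)$ and monotone convergence of $F_n\uparrow F$; both rely on the standing Fatou hypothesis, but the paper's is more elementary while yours ties the estimate directly to the $\prec$-ordering. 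For the counterexample the paper is explicit ($x_n=n\chi_{[0,1/n)}$, $y_n=2n\chi_{[0,1/(2n))}$); your ``spread out'' $x_n$ should be made concrete in the same spirit, e.g.\ $x_n=\tfrac{n}{2}\chi_{[0,2/n)}$, so that $\|x_n\|_E\to 1/c$ genuinely holds---the phrasing ``$(n-O(1))\phi_E(1/n)$'' suggests a bounded $x_n$, which would not match norms.
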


\begin{proof}
	\textit{Necessity}. Let $(x_n),(y_n)\subset{E}$, $x_n\prec{y_n}\prec{y_{n+1}}$ and 
	\begin{equation}\label{equ:IUKM}
	\lim_{n\rightarrow\infty}\norm{y_n}{E}{}=\lim_{n\rightarrow\infty}\norm{x_n}{E}{}=d,
	\end{equation}
	where $d\in(0,\infty)$. Clearly, by Proposition 5.9 in \cite{BS} we get $E$ is embedded in the Marcinkiewicz space $M_{\phi_E}$ with the norm one, and so
	\begin{equation*}
	\phi_E(t)y_n^*(t)\leq\norm{y_n}{E}{}\leq{d}<\infty
	\end{equation*}
	for all $t>0$, and $n\in\mathbb{N}$. Hence, by Helly's Selection Theorem \cite{STV}, passing to subsequence if necessary we may assume that there is $y_0^*=y_0\in{L^0}$ such that $y_n^*\rightarrow{y_0^*}$ a.e. on $I$. Next, applying \eqref{equ:IUKM} and by assumption that $E$ has the Fatou property, in view of Fatou's Lemma \cite{BS} it follows that $y_0\in{E}$ and
	\begin{equation}\label{equ:1:IUKM}
	\norm{y_0}{E}{}\leq\liminf_{n\rightarrow\infty}\norm{y_n}{E}{}. 
	\end{equation}
	Moreover, we have for any $t>0$ and $n\in\mathbb{N}$,
	\begin{equation}\label{equ:0:IUKM}
	x_n^{**}(t)\leq{y}_n^{**}(t)\leq{y_{n+1}^{**}(t)}\leq\frac{\norm{y_n}{E}{}}{\phi_E(t)}\leq\frac{d}{\phi_E(t)}.
	\end{equation}
	Consequently, for any $t>0$ we obtain
	\begin{equation*}
	\liminf_{n\rightarrow\infty}{y}_n^{**}(t)=\lim_{n\rightarrow\infty}{y}_n^{**}(t)\leq\frac{d}{\phi_E(t)}.
	\end{equation*}
	Hence, since $y_n^*\rightarrow{y_0^*}$, by Fatou's Lemma we conclude that
	\begin{equation}\label{equ:2:IUKM}
	{y_0^{**}(t)}\leq\lim_{n\rightarrow\infty}{y}_n^{**}(t)<\infty
	\end{equation}
	for any $t\in{I}$, $t>0$. Now, in view of Corollary 5.3 in \cite{BS}, by assumption $\lim_{t\rightarrow{0^+}}t/\phi_{E}(t)=0$ and by \eqref{equ:0:IUKM} it follows that for any $\epsilon>0$ there exists $t_\epsilon>0$ such that for all $t\in(0,t_\epsilon]$ and $n\in\mathbb{N}$,
	\begin{equation}\label{equ:4:IUKM}
	\frac{1}{d}\int_{0}^{t}y_n^*\leq\frac{t}{\phi_E(t)}<\frac{\epsilon}{4}.
	\end{equation}
	Furthermore, since $y_n^*\rightarrow{y_0^*}$ a.e. on $I$, by the Lebesgue Dominated Convergence Theorem we observe that for any $0<s<t$, there exists $N_{\epsilon}\in\mathbb{N}$ such that for all $n\geq{}N_{\epsilon}$,
	\begin{equation*}
	\abs{\int_s^t{y_n^*}-\int_s^t{y_0^*}}{}{}<\frac{d\epsilon}{4}.
	\end{equation*}
	Next, without loss of generality taking additionally $s<t_\epsilon$ for the previous inequality and using \eqref{equ:4:IUKM} for any $n\geq{N_\epsilon}$ we get
	\begin{align*}
	\frac{1}{d}\int_{0}^{t}y_n^{*}=\frac{1}{d}\int_{0}^{s}y_n^{*}+\frac{1}{d}\int_{s}^{t}y_n^{*}&\leq\frac{1}{d}\int_{0}^{t_\epsilon}y_n^*+\frac{1}{d}\abs{\int_{s}^{t}y_n^*-\int_{s}^{t}y_0^*}{}{}+\frac{1}{d}\int_{s}^{t}y_0^*\\
	&<\frac{t_\epsilon}{\phi_E(t_\epsilon)}+\frac{\epsilon}{4}+\frac{1}{d}\int_{s}^{t}y_0^*<\frac{\epsilon}{2}+\frac{1}{d}\int_{0}^{t}y_0^*.
	\end{align*}
	In consequence, applying \eqref{equ:0:IUKM} and \eqref{equ:2:IUKM} we conclude that for all $t\in{I}$ with $t>0$,
	\begin{equation}\label{equ:3:IUKM}
	\lim_{n\rightarrow\infty}y_n^{**}(t)={y_0}^{**}(t).
	\end{equation}	
	Now, since $y_n\prec{y_{n+1}}$ for any $n\in\mathbb{N}$, we claim that $y_n\prec{y_0}$ for all $n\in\mathbb{N}$. Indeed, if it is not true then there exist $t_0>0$, $\epsilon_0>0$ and $(n_k)\subset\mathbb{N}$ such that for every $k\in\mathbb{N}$ we have 
	\begin{equation*}
	y_0^{**}(t_0)<y_{n_k}^{**}(t_0)-\epsilon_0.
	\end{equation*} 
	On the other hand, in view of \eqref{equ:3:IUKM} there exists $N_0\in\mathbb{N}$ such that for each $n\geq{N_0}$,
	\begin{equation*}
	y_n^{**}(t_0)<y_0^{**}(t_0)+\epsilon_0/2,
	\end{equation*} 
	which yields a contradiction and proves the claim. Hence, by assumption $x_n\prec{y_n}\prec{y_{n+1}}$ for all $n\in\mathbb{N}$ we get $x_n\prec{y_0}$ for any $n\in\mathbb{N}$. Therefore, by symmetry of $E$ we have $\norm{x_n}{E}{}\leq\norm{y_0}{E}{}$, and so in view of the facts \eqref{equ:IUKM} and \eqref{equ:1:IUKM} we observe that
	\begin{equation}\label{equ:6:IUKM}
	d=\lim_{n\rightarrow\infty}\norm{x_n}{E}{}=\lim_{n\rightarrow\infty}\norm{y_n}{E}{}=\norm{y_0}{E}{}. 
	\end{equation}
	In consequence, sine $x_n\prec{y_0}$ and $y_n\prec{y_0}$ for any $n\in\mathbb{N}$, by assumption that $E$ is lower locally uniformly $K$-monotone it follows that 
	\begin{equation*}
	\lim_{n\rightarrow\infty}\norm{x_n^*-y_0^*}{E}{}=\lim_{n\rightarrow\infty}\norm{y_n^*-y_0^*}{E}{}=0.
	\end{equation*} 
	Finally, using the triangle inequality of the norm in $E$ we obtain
	\begin{equation*}
	\lim_{n\rightarrow\infty}\norm{y_n^*-x_n^*}{E}{}=0.
	\end{equation*}
	\textit{Sufficiency.} Immediately, by Remark 4.1 in \cite{CieLewUKM} it is enough to prove that increasing uniform $K$-monotonicity implies
	$$\lim_{t\rightarrow{0^+}}\frac{t}{\phi_E(t)}=0.$$
	By Corollary 5.3 in \cite{BS}, we suppose for a contrary that $\lim_{t\rightarrow{0^+}}{\phi_E(t)/t}=c>0$. Define for any $n\in\mathbb{N}$,
	\begin{equation*}
		x_n=n\chi_{[0,\frac{1}{n})}\qquad\textnormal{and}\qquad y_n=2n\chi_{[0,\frac{1}{2n})}.
	\end{equation*}
	Then, it is easy to see that $x_n\prec{y_n}\prec{y_{n+1}}$ for all $n\in\mathbb{N}$ and 
	\begin{equation*}
	\lim_{n\rightarrow\infty}\norm{x_n}{E}{}=\lim_{n\rightarrow\infty}\frac{\phi_{E}(1/n)}{1/n}=c\qquad\textnormal{and}\qquad\lim_{n\rightarrow\infty}\norm{y_n}{E}{}=\lim_{n\rightarrow\infty}\frac{\phi_{E}(1/2n)}{1/2n}=c.
	\end{equation*}
	In consequence, by assumption that $E$ is increasing uniformly $K$-monotone it follows that 
	\begin{equation*}
	\lim_{n\rightarrow\infty}\norm{y_n^*-x_n^*}{E}{}=0.
	\end{equation*}
	On the other hand, by  symmetry of $E$ we have
	\begin{align*}
	\lim_{n\rightarrow\infty}\norm{y_n^*-x_n^*}{E}{}&=\lim_{n\rightarrow\infty}\norm{2n\chi_{[0,\frac{1}{2n})}-n\chi_{[0,\frac{1}{n})}}{E}{}=\lim_{n\rightarrow\infty}\norm{n\chi_{[0,\frac{1}{2n})}-n\chi_{[\frac{1}{2n},\frac{1}{n})}}{E}{}\\
	&=\lim_{n\rightarrow\infty}\frac{\phi_{E}(1/n)}{1/n}=c>0,
	\end{align*} 
	which contradicts and completes the proof.
\end{proof}

\section{Substochastic operators}

This section is devoted to investigation of convergence of the substochastic operators in symmetric spaces. First, we prove that any countable sum of substochastic operators is substochastic. 

\begin{remark}
	Let us recall some basic facts about substochastic operators, which will use in our further investigation. Letting $\mathbb{S}$ is a family of all substochastic operators on $L^1+L^\infty$ it is easy to show that $\mathbb{S}$ is convex and closed under the composition. Indeed, by subadditivity of the Hardy operator (see Theorem 3.4 in \cite{BS}) taking $A,B\in\mathbb{S}$, for any $\theta\in[0,1]$ and $x\in L^1+L^\infty$ we have
	\begin{align*}
		\theta A(x)+(1-\theta)B(x)\prec x\quad\textnormal{and}\quad A\circ B(x)\prec B(x)\prec x.
	\end{align*}
\end{remark}

\begin{proposition}\label{prop:sum:substoch:operat}
	Let $(T_j)$ be a sequence of substochastic operators on a Banach pair $(L^1,L^\infty)$ and let $(E_j)_{j\in\mathbb{J}}$ be a countable collection of pairwise disjoint subsets of $I$, each with finite positive measure. Assume that $E=I\setminus{\bigcup_{j\in\mathbb{J}}}E_j$ and $f\in{L^0}$ be locally integrable function on every $E_j$. Then the operator $\mathcal{A}:L^0\rightarrow{L^0}$ given by 
	\begin{equation*}
		\mathcal{A}(f)=f\chi_{E}+\sum_{j\in J}T_j(f\chi_{E_j})\chi_{E_j}
	\end{equation*}
	holds $\mathcal
	{A}(f)\prec{f}$.
\end{proposition}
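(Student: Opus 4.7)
The plan is to prove $\mathcal{A}(f)\prec f$ by verifying the integral form of the Hardy-Littlewood-P\'olya relation: for $g,h\in L^1+L^\infty$, $g\prec h$ is equivalent to
\begin{equation*}
\int_A|g|\le\int_0^{\mu(A)}h^*\qquad\text{for every measurable }A\subset I\text{ with }\mu(A)<\infty,
\end{equation*}
both directions resting on the identity $\int_0^t h^*=\sup_{\mu(A)=t}\int_A|h|$. Before fixing $A$, note that the finite measure and local integrability of $f$ on each $E_j$ ensure $f\chi_{E_j}\in L^1\subset L^1+L^\infty$, so every term $T_j(f\chi_{E_j})$ is well defined and satisfies $T_j(f\chi_{E_j})\prec f\chi_{E_j}$ by the substochastic characterization recalled in the Preliminaries; moreover the summands $f\chi_E$ and $T_j(f\chi_{E_j})\chi_{E_j}$ are supported on pairwise disjoint sets, so the series defining $\mathcal{A}(f)$ is pointwise finite and measurable.

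Fix a measurable $A\subset I$ with $\mu(A)<\infty$ and use the disjoint decomposition $A=(A\cap E)\sqcup\bigsqcup_{j\in\mathbb{J}}(A\cap E_j)$. The disjointness of supports gives
\begin{equation*}
\int_A|\mathcal{A}(f)|=\int_{A\cap E}|f|+\sum_{j\in\mathbb{J}}\int_{A\cap E_j}|T_j(f\chi_{E_j})|,
\end{equation*}
and for each $j$ the substochastic majorization combined with the Hardy-Littlewood inequality yields
\begin{equation*}
\int_{A\cap E_j}|T_j(f\chi_{E_j})|\le\int_0^{\mu(A\cap E_j)}\bigl(T_j(f\chi_{E_j})\bigr)^*\le\int_0^{\mu(A\cap E_j)}(f\chi_{E_j})^*,
\end{equation*}
while $\int_{A\cap E}|f|\le\int_0^{\mu(A\cap E)}(f\chi_E)^*$ is the analogous direct Hardy-Littlewood bound.

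What remains is a purely rearrangement-theoretic step. Using $\int_0^{t}(f\chi_B)^*=\sup\{\int_S|f|:S\subset B,\ \mu(S)=t\}$ and the fact that Lebesgue measure is non-atomic (so by Lyapunov's theorem the supremum is attained), I pick realizing sets $S_0\subset E$ with $\mu(S_0)=\mu(A\cap E)$ and $S_j\subset E_j$ with $\mu(S_j)=\mu(A\cap E_j)$. The family $\{S_0\}\cup\{S_j\}_{j\in\mathbb{J}}$ is pairwise disjoint with total measure exactly $\mu(A)$, so
\begin{equation*}
\int_0^{\mu(A\cap E)}(f\chi_E)^*+\sum_{j\in\mathbb{J}}\int_0^{\mu(A\cap E_j)}(f\chi_{E_j})^*=\int_{S_0\sqcup\bigsqcup_j S_j}|f|\le\int_0^{\mu(A)}f^*,
\end{equation*}
the last step being Hardy-Littlewood applied to $f$ itself. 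Chaining the three displays gives $\int_A|\mathcal{A}(f)|\le\int_0^{\mu(A)}f^*$, and since $A$ was arbitrary of finite measure, $\mathcal{A}(f)\prec f$.

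The only points that need some care are the countable-sum bookkeeping: interchanging sum and integral in the first display is justified by monotone convergence applied to the non-negative series $\sum_j|T_j(f\chi_{E_j})|\chi_{A\cap E_j}$, and the simultaneous realization of countably many suprema in the last display is either immediate from non-atomicity or can be replaced by a standard $\varepsilon\,2^{-j}$ perturbation followed by $\varepsilon\to 0$. I do not anticipate any deeper obstacle; the conceptual content is simply that substochastic operators act separately on each ``block'' of a partition and the blockwise majorizations reassemble cleanly because the partition is disjoint.
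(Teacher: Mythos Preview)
Your argument is correct and is organized differently from the paper's. The paper proceeds in three stages: it first proves the single-block case $J=\{1\}$ (by exactly your method---bound $\int_{F\cap E_1}|T_1(f\chi_{E_1})|$ by $\int_0^{\mu(F\cap E_1)}(f\chi_{E_1})^*$, realize this as $\int_B|f|$ for some $B\subset E_1$ via Proposition~3.3(b) of \cite{BS}, and combine $B$ with $F\cap E$), then treats finite $J$ by the nesting $\mathcal{A}_nf\prec\mathcal{A}_{n-1}f\prec\cdots\prec f$, and finally passes to countable $J$ by reducing to $f\ge 0$ via $|\mathcal{A}f|\le\mathcal{A}|f|$, truncating to $f_n=f\chi_E+\sum_{m\le n}f\chi_{E_m}$, and invoking a Fatou-type stability of $\prec$ under monotone limits. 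You instead run the single-block argument on all blocks simultaneously, picking realizing sets $S_j\subset E_j$ at once and assembling them into one set of measure $\mu(A)$. This is shorter: it bypasses both the induction and the limiting step, and needs no reduction to nonnegative $f$. One cosmetic point: your detour through $(f\chi_E)^*$ and the realizer $S_0$ is unnecessary---you may simply keep $\int_{A\cap E}|f|$ as is and take $S_0=A\cap E$, which is what the paper does for the untouched part; this also sidesteps any worry about whether $\int_0^{\mu(A\cap E)}(f\chi_E)^*$ is finite.
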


\begin{proof}
	Assume first $\mathcal{A}(f)=f\chi_{E}+T_1(f\chi_{E_1})\chi_{E_1}$ for $0<\mu(E_1)<\infty$ and $E=I\setminus{E_1}$. Then, taking $F\subset{I}$, $t=\mu(F)\in{I}\setminus\{0\}$, $t_0=\mu(F \cap E_1)$ we observe that
	\begin{align*}
		\int_{F}|\mathcal{A}f|d\mu&=\int_{F\cap{E}}|\mathcal{A}f|d\mu+\int_{F\cap E_1}|\mathcal{A}f|d\mu\\
		&=\int_{F\cap{E}}|f|d\mu+\int_{F\cap E_1}|T_1(f\chi_{E_1})|d\mu.
	\end{align*}
	Next, by assumption that $T_j$ is a substochastic operator on $L^1+L^\infty$ and by Lemma 2.5 in\cite{BS} we have
	\begin{align*}
		\int_{F\cap E_1}|T_1(f\chi_{E_1})|d\mu&\leq \mu(F\cap E_1) (T_1(f\chi_{E_1}))^{**}(\mu(F\cap E_1))d\mu \\
		&\leq\mu(F\cap E_1)(f\chi_{E_1})^{**}(\mu(F\cap E_1))d\mu\\
		&=\int_0^{\mu(F\cap E_1)}(f\chi_{E_1})^*(s)ds.
	\end{align*}
	Since $t_0=\mu(F\cap E_1)\leq\mu(E_1)$, by Proposition 3.3(b) in \cite{BS} there exists a subset $B\subset{E_1}$ such that $\mu(B)=t_0$ and 
	\begin{equation*}
		\int_0^{\mu(F\cap E_1)}(f\chi_{E_1})^*(s)ds=\int_{B}|f\chi_{E_1}|d\mu=\int_{B}|f|d\mu.
	\end{equation*}
	In consequence, we get
	\begin{align*}
		\int_{F}|\mathcal{A}f|d\mu&\leq \int_{F\cap{E}}|f|d\mu+\int_0^{\mu(F\cap E_1)}(f\chi_{E_1})^*(s)ds\\
		&\leq\int_{F\cap{E}}|f|d\mu+\int_{B}|f|d\mu=\int_{B\cup F\cap{E}}|f|d\mu
	\end{align*}
	Next, since $B\cap (F\cap E)=\emptyset$ and	$\mu(F\cap E \cup B)=\mu(E\cup F)+\mu(B)=t$, by the Hardy-Littlewood inequality (Theorem 2.2 in \cite{BS}) it follows that 
	\begin{equation*}
		\int_F|\mathcal{A}f|d\mu\leq \int_{0}^t f^*(s)ds.
	\end{equation*}
	In consequence, by Proposition 3.3 \cite{BS} taking supremum over all sets $F$ of a measure $t$ we conclude that for all $t>0$,
	\begin{equation*}
		\int_{0}^t(\mathcal{A}f)^*(s)ds\leq\int_0^t f^*(s)ds.
	\end{equation*}
	Now, we consider the case when $J=\{1,2,3,\dots,N\}$ is a finite index set. Define 
	\begin{equation*}
		I_n=I\setminus\bigcup_{m=1}^nE_m\quad\textnormal{and}\quad \mathcal{A}_nf=f\chi_E+\sum_{m=1}^{n}T_m(f\chi_{E_m})\chi_{E_m}
	\end{equation*}
	for all $n\in{J}$. We easily observe that 
	\begin{equation*}
		\mathcal{A}_nf\prec\mathcal{A}_{n-1}f\prec\mathcal{A}_{n-2}f\prec\dots\prec\mathcal{A}_1f\prec f.
	\end{equation*}
	Now, assume that $J=\{1,2,\dots\}$ is countable infinite. We claim that $|\mathcal{A}f|\leq\mathcal{A}|f|$. Indeed, by assumption that $T_j$ is a positive contraction on a pair $(L^1,L^\infty)$ and since $-|f|\leq{f}\leq|f|$ it follows that $-\mathcal{A}|f|\leq{\mathcal{A}f}\leq\mathcal{A}|f|$. Now, it is sufficient to prove that $\mathcal{A}f\prec{f}$ for all nonnegative $f$. Consider for all $n\in\mathbb{N}$,
	\begin{equation*}
		f_n=f\chi_{E}+\sum_{m=1}^{n}f\chi_{E_m}.
	\end{equation*}
	It is easy to see that $0\leq{f_n}\uparrow{f}$ a.e. and since $T_j$ for each $j\in{J}$ is a positive contraction we provide
	\begin{align*}
		\mathcal{A}f_n&=\mathcal{A}(f\chi_E)+\mathcal{A}\left(\sum_{m=1}^{n}f\chi_{E_m}\right)\\
		&=f\chi_E+\sum_{m=1}^{n}T_m\left(f\chi_{E_m}\right)\chi_{E_m}\rightarrow f\chi_E+\sum_{m\in J}T_m\left(f\chi_{E_m}\right)\chi_{E_m}
	\end{align*}
	a.e. on $I$. Moreover, we have 
	\begin{equation*}
		\mathcal{A}f_n=\mathcal{A}_nf_n=\mathcal{A}_nf\prec{f} 
	\end{equation*}
	for all $n\in\mathbb{N}$, whence by Proposition 3.2 \cite{BS} this yields $\mathcal{A}f\prec{f}$.
\end{proof}

\begin{remark}
	Let us observe that the above theorem is a more general version of the well-known  Proposition 3.7 in \cite{BS} shown for integer operators. Namely, in Proposition \ref{prop:sum:substoch:operat} replacing the operator $T_j$ by the integral operator
	\begin{equation*}
		\frac{1}{\mu(E_j)}\int_{E_j}{x(t)}d\mu(t),
	\end{equation*}
	given for all integrable $x$ on each $E_j$, we constitute the conclusion immediately.
\end{remark}

\begin{remark}
	Now, we recall some examples of substochastic operators. First, we observe that for any measure preserving transformation $\sigma:I\rightarrow{I}$ there exists a substochastic operator $T_\sigma:L^1+L^\infty\rightarrow{L^1+L^\infty}$ given by $T_\sigma(x)=x\circ\sigma$ for any $x\in{L^1+L^\infty}$. Namely, $T_\sigma(x)\prec x$ for any $x\in{L^1+L^\infty}$. Next, taking $T=\{e^{i\theta}:-\pi\leq\theta\leq\pi\}$, by Lemma 6.1 in \cite{BS}, we notice that the convolution given for any $f\in{L^1(T)}$ by
	$$(f*g)(e^{i\theta})=\int_{-\pi}^{\pi}f(e^{i(\theta-\phi)})g(e^{i\phi})d\phi$$
	where $g\in L^1(T)$ with $\norm{g}{L^1}{}=1$ is a substochastic operator in $L^1(T)$ i.e. for any $f\in{L^1(T)}$ we have $(f*g)\prec\norm{g}{L^1}{}f=f$. Furthermore, by Theorem 3.8 in \cite{BS} we see that the Hardy-Littlewood maximal operator defined for any locally integrable function $f$ on $\mathbb{R}^n$ by
	$$Mf(t)=\sup_{\Omega\in t}\frac{1}{\mu(\Omega)}\int_{\Omega}|f(s)|ds$$
	constitutes the substochastic operator $T(f)=cM(f)\prec{f}$ for any $f\in L^1(\mathbb{R}^n)+L^\infty(\mathbb{R}^n)$, where constant $c$ depends on $n\in\mathbb{N}$. 	
	Finally, ti is worth mentioning that any permutation $\gamma:{\mathbb{N}\rightarrow\mathbb{N}}$ generates a substochastic operator $P_\gamma(x)=x_{\gamma(n)}$ for all $x\in\ell^1+\ell^\infty$.
\end{remark}

Now, we discuss a relation between lower local uniform $K$-monotonicity and convergence of the sequence of substochastic operators in symmetric spaces.

\begin{theorem}\label{thm:approx:2}
	Let $E$ be a symmetric space on $I$. Assume that $(\mathcal{S}_n)$ is a sequence of operators given by
	\begin{equation*}
	\mathcal{S}_n(x)=x\chi_{\Omega_n}+\sum_{j\in J_n}\left(\frac{1}{\mu(E_j^{(n)})}\int_{E_j^{(n)}}xd\mu\right)\chi_{E_j^{(n)}}
	\end{equation*}
	for any $x\in{E}$, where for any $n\in\mathbb{N}$, $\mathcal{A}_{n}=\{E_j^{(n)}\}_{j\in J_n}$ is a countable collection of pairwise disjoint measurable subsets of positive and finite measure on $I$ such that each set of $\mathcal{A}_n$ is the union of a finite subcollection of $\mathcal{A}_{n+1}$ and
	\begin{equation*}
	\Omega_n=I\setminus\bigcup_{j\in J_n}E_j^{(n)},\qquad\lim_{n\rightarrow\infty}\sup_{j\in J_n}\mu(E_j^{(n)})=0.
	\end{equation*}
	If $E$ is lower locally uniformly $K$-monotone, then for any $x\in{E}$ we have $$\norm{\mathcal{S}_n(x)-x}{E}{}\rightarrow{0}\qquad\textnormal{as}\qquad{n\rightarrow\infty}.$$
	Additionally, there exists a sequence of finite rank operators $(T_n)$ from $E$ into $E$ such that 
	\begin{equation*}
	\norm{(\mathcal{S}_n-T_n)(x)}{}{}\rightarrow{0}\qquad\textnormal{as}\qquad{n\rightarrow\infty}
	\end{equation*}
	for any $x\in{E}$.
\end{theorem}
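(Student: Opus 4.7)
The plan is to establish $\norm{\mathcal{S}_n(x)-x}{E}{}\to 0$ in three stages and then address the finite rank addendum separately. First, each integral averaging $y\mapsto\bigl(\frac{1}{\mu(E_j^{(n)})}\int_{E_j^{(n)}}y\,d\mu\bigr)\chi_{E_j^{(n)}}$ is substochastic on $L^1+L^\infty$ (see the remark immediately following Proposition~\ref{prop:sum:substoch:operat}), so Proposition~\ref{prop:sum:substoch:operat} applied to this sequence gives $\mathcal{S}_n(x)\prec x$ and hence $\norm{\mathcal{S}_n(x)}{E}{}\leq\norm{x}{E}{}$. Second, I would establish $\mathcal{S}_n(x)\to x$ almost everywhere on $I$: on $\Omega_\infty=\bigcap_n\Omega_n$ this is trivial since $\mathcal{S}_n(x)=x$ identically there, while on $U_\infty=I\setminus\Omega_\infty$ a martingale/differentiation argument against the nested sigma-algebras with atoms $\{E_j^{(n)}\}_{j\in J_n}\cup\{\Omega_n\}$, using the refinement condition together with $\sup_j\mu(E_j^{(n)})\to 0$, delivers the required a.e.\ pointwise convergence. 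Combining this with the Fatou property of $E$ yields $\norm{x}{E}{}\leq\liminf_n\norm{\mathcal{S}_n(x)}{E}{}$, and with the first stage we obtain $\norm{\mathcal{S}_n(x)}{E}{}\to\norm{x}{E}{}$. Third, the $LLUKM$ hypothesis applied to the sequence $(\mathcal{S}_n(x))$ produces $\norm{\mathcal{S}_n(x)^*-x^*}{E}{}\to 0$.

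The principal obstacle is upgrading this rearrangement-level convergence to the genuine statement $\norm{\mathcal{S}_n(x)-x}{E}{}\to 0$. My plan is to combine the a.e.\ vanishing of $\mathcal{S}_n(x)-x$ with the substochastic estimate $(\mathcal{S}_n(x)-x)^{**}(t)\leq 2x^{**}(t)$, which supplies a uniform dominating function; passing from a.e.\ convergence to convergence in measure (on sub-intervals of finite measure) and then to a.e.\ vanishing of $(\mathcal{S}_n(x)-x)^*$, one invokes the Fatou property of $E$ together with the symmetry identity $\norm{\mathcal{S}_n(x)-x}{E}{}=\norm{(\mathcal{S}_n(x)-x)^*}{E}{}$ to close the argument.

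For the finite-rank addendum, I would enumerate $J_n=\{j_1,j_2,\dots\}$ and set $T_n(x)=\sum_{k=1}^{N_n}\bigl(\frac{1}{\mu(E_{j_k}^{(n)})}\int_{E_{j_k}^{(n)}}x\,d\mu\bigr)\chi_{E_{j_k}^{(n)}}$, an operator of rank at most $N_n$. Then $(\mathcal{S}_n-T_n)(x)=x\chi_{\Omega_n}+\sum_{k>N_n}\bigl(\frac{1}{\mu(E_{j_k}^{(n)})}\int_{E_{j_k}^{(n)}}x\,d\mu\bigr)\chi_{E_{j_k}^{(n)}}$. Choosing $N_n\uparrow\infty$ sufficiently fast, for instance via a diagonal argument over a countable dense subset of $E$, forces the tail to vanish in $E$-norm; the residual $x\chi_{\Omega_n}$ is handled using the order continuity consequences of $LLUKM$ combined with the decreasing structure of $\Omega_n$ induced by the refinement hypothesis.
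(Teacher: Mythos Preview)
Your first three stages are sound, and the Fatou step in stage three --- deducing $\|x\|_E\le\liminf_n\|\mathcal S_n(x)\|_E$ directly from $\mathcal S_n(x)\to x$ a.e.\ --- is in fact cleaner than the paper's route, which constructs auxiliary elements $x_n$ with $|x_n|\le|T_n(x)|$ and $x_n\to x$ a.e.\ and then uses order continuity to squeeze the norms.

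The genuine gap is in your upgrading step. Two problems compound. First, on $I=[0,\infty)$ the a.e.\ vanishing of $\mathcal S_n(x)-x$ yields only convergence in measure on finite-measure subintervals, and this does not force $(\mathcal S_n(x)-x)^*\to 0$ a.e.; the sliding bump $f_n=\chi_{[n,n+1)}$ satisfies $f_n\to 0$ a.e.\ while $f_n^*=\chi_{[0,1)}$ for every $n$. Second, even granting that rearrangement convergence, the Fatou property goes the wrong way: from $g_n\to 0$ a.e.\ one obtains only the triviality $0\le\liminf\|g_n\|_E$, never $\|g_n\|_E\to 0$. Nor does the bound $(\mathcal S_n(x)-x)^{**}\le 2x^{**}$ supply a usable pointwise dominant in $E$, since $x^{**}$ need not lie in $E$. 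The paper proceeds differently: it first extracts order continuity of $E$ from $LLUKM$ (Proposition~3.4 of \cite{Cies-LLUKM}), then establishes \emph{global} convergence in measure of $\mathcal S_n(x)$ to $x$ --- arguing first for simple functions of finite-measure support and extending by density via the uniform contractivity $\|\mathcal S_n\|\le 1$ --- and finally invokes Proposition~2.4 of \cite{CzeKam}, which combines order continuity, global convergence in measure, and the rearrangement-level convergence $\|\mathcal S_n(x)^*-x^*\|_E\to 0$ already obtained from $LLUKM$, to conclude $\|\mathcal S_n(x)-x\|_E\to 0$.

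Your finite-rank argument has a separate oversight: the refinement hypothesis gives $\Omega_{n+1}\subset\Omega_n$, but not $\mu(\Omega_n)\to 0$; that additional assumption first appears in Theorem~\ref{thm:approx:3}. Hence $x\chi_{\Omega_n}$ decreases only to $x\chi_{\Omega_\infty}$, which may have positive norm, so your proposed $T_n$ does not in general satisfy $\|(\mathcal S_n-T_n)(x)\|_E\to 0$.
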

\begin{proof}
	First, we prove that for any $x\in{E}$,
	\begin{equation}\label{equ:1:thm:approx:2}
	\norm{\mathcal{S}_n(x)^*-x^*}{E}{}\rightarrow{0}\qquad\textnormal{as}\qquad{n\rightarrow\infty}.
	\end{equation}
	Let $|J_n|$ means the cardinality of the set $J_n$ for any $n\in\mathbb{N}$. Then, by definition of a sequence $(\mathcal{A}_n)$ we are able to find a sequence $(\sigma_n)\subset\mathbb{N}$ such that $\sigma_n\leq|J_n|$ and for every $n\in\mathbb{N}$ we have
	\begin{equation*}
	\bigcup_{j=1}^{\sigma_n}E_j^{(n)}\subset\bigcup_{j=1}^{\sigma_{n+1}}E_j^{(n+1)}.
	\end{equation*}
	We define the finite rank operator $T_n:E\rightarrow{E}$ that corresponds to $\mathcal{S}_n$ given by
	\begin{equation*}
	T_n(x)=x\chi_{\Theta_n}+\sum_{j=1}^{\sigma_n}\left(\frac{1}{\mu(E_j^{(n)})}\int_{E_j^{(n)}}xd\mu\right)\chi_{E_j^{(n)}}
	\end{equation*}
	for any $x\in{E}$, where  $\Theta_n=\Omega_n\cap[0,n]$. It is easy to see that $\abs{T_n(x)}{}{}\leq\abs{\mathcal{S}_n(x)}{}{}$ for any $n\in\mathbb{N}$ and $x\in{E}$. Furthermore, by Proposition 3.7 \cite{BS} it follows that 
	\begin{equation}\label{equ:2:thm:approx:2}
	\mathcal{S}_n(x)\prec{x}\qquad\textnormal{for all }n\in\mathbb{N}\textnormal{ and }x\in{E}.
	\end{equation}
	Hence, since $E$ is symmetric we conclude
	\begin{equation}\label{inequ:thm:approx:2}
	\norm{T_n(x)}{}{}\leq\norm{\mathcal{S}_n(x)}{E}{}\leq\norm{x}{E}{}
	\end{equation}
	for any $n\in\mathbb{N}$ and $x\in{E}$. Next, since $x\in{E}$ is locally integrable on $I$, in view of Lebesgue's differentiation theorem (see Theorem 3.4 \cite{BS}) and by assumption that
	\begin{equation}\label{measure:thm:approx:2}
	\lim_{n\rightarrow\infty}\sup_{j\in J_n}\mu(E_j^{(n)})=0
	\end{equation}
	we get that for a.e. on $I$,
	\begin{equation}\label{conver:thm:approx:2}
	\mathcal{S}_n(x)\rightarrow{x}\qquad\textnormal{and}\qquad{T}_n(x)\rightarrow{x}\qquad\textnormal{as}\qquad n\rightarrow\infty.
	\end{equation} 
	Moreover, since $E$ is lower locally uniformly $K$-monotone, by Proposition 3.4 \cite{Cies-LLUKM} this yields that $E$ is order continuous.	Now, for any fixed $x\in{E}$ we can find a sequence of bounded functions $(x_n)\subset{E}$ that corresponds to $(T_n)$ such that $|x_n|\leq\min\{|x|,{n}\}$ and $T_n(x_n)=x_n$ for every $n\in\mathbb{N}$ and $x_n\rightarrow{x}$ a.e. on $I$ as $n\rightarrow\infty$. Namely, taking $(y_n)$ a sequence of truncations of the element $x$ we may construct $(x_n)$ by
	\begin{equation*}
	x_n=y_n\chi_{\Theta_n}+\sum_{j=1}^{\sigma_n}c_j^{(n)}\chi_{E_j^{(n)}}
	\end{equation*}	  
	where a sequence $(c_j^{(n)})\subset\mathbb{R}$ is chosen in such way that $|x_n|\leq|y_n|\leq|x|$ on $I$ and it guarantees that $x_n$ converges to $x$ a.e. on $I.$ Additionally, by definition of $x_n$ on the set $\bigcup_{j=1}^{\sigma_n}E_j^{(n)}$ it is easy to observe that $T_n(x_n)=x_n$ and $|x_n|\leq{|T_n(x)|}$ a.e. on $I$ for all $n\in\mathbb{N}$. Therefore, by order continuity and symmetry of $E$ we have 
	\begin{equation*}
		\norm{x_n}{E}{}\leq\norm{T_n(x)}{E}{}\quad\textnormal{for all }n\in\mathbb{N},\qquad\norm{x-x_n}{E}{}\rightarrow{0}\quad\textnormal{as}\quad n\rightarrow\infty.
	\end{equation*}
	Hence, by \eqref{inequ:thm:approx:2} we conclude 
	\begin{equation*}
	\lim_{n\rightarrow\infty}\norm{T_n(x)}{E}{}=\lim_{n\rightarrow\infty}\norm{\mathcal{S}_n(x)}{E}{}=\norm{x}{E}{}.
	\end{equation*}
	Thus, since $E$ is lower locally uniformly $K$-monotone, in view of \eqref{equ:2:thm:approx:2} we show \eqref{equ:1:thm:approx:2}. Now, we prove that $\mathcal{S}_n(x)$ converges to $x$ globally in measure on $I$. Since $E$ is order continuous, by Lemma 2.5 \cite{CieKolPan} it follows that $\lim_{t\rightarrow{a}}x^*(t)=x^*(a)=0$. So, for any $\epsilon>0$ there exists a subset $\Omega\subset{I}$ such that $|x(t)|<\epsilon$ for any $t\in\Omega$. It is easy to notice that $\mu(\Omega^c)=d_{x}(\epsilon)<\infty$ and $\mathcal{S}_n(x)=x$ on $\Omega_n$ for all $n\in\mathbb{N}$, whence
	\begin{align*}
	\mu\left\{s\in{I}:\abs{\mathcal{S}_n(x)(s)-x(s)}{}{}\geq\epsilon\right\}
	=&\mu\left\{s\in{\Omega^c}\cap\Omega_n^c:\abs{\mathcal{S}_n(x)(s)-x(s)}{}{}\geq\epsilon\right\}\\
	&+\mu\left\{s\in{\Omega}\cap\Omega_n^c:\abs{\mathcal{S}_n(x)(s)-x(s)}{}{}\geq\epsilon\right\}
	\end{align*} 
	for every $n\in\mathbb{N}$. Next, in view of \eqref{conver:thm:approx:2} we observe that 
	\begin{equation}\label{equ:4:approx:2}
	\mu\left\{s\in{\Omega^c}\cap\Omega_n^c:\abs{\mathcal{S}_n(x)(s)-x(s)}{}{}\geq\epsilon\right\}\rightarrow{0}\quad\textnormal{as}\quad{n\rightarrow\infty}.
	\end{equation} 
	We suppose temporarily for simplicity of our investigation that $x$ is a simple function with finite measure support. Then, without loss of generality we may consider that 
	\begin{equation*}
	x=\sum_{j=1}^ma_i\chi_{B_j}
	\end{equation*}
	where $\{B_1,\dots,B_m\}\subset{I}$ are pairwise disjoint subsets of finite positive measure and $a_j\in\mathbb{R}$ for any $j\in\{1,\dots,m\}$. Thus, by Proposition 8 and Proposition 15 in \cite{Royd} for any $j\in\{1,\dots,m\}$ there exists a finite collection of open intervals $\{U_i\}_{i=1}^{m_j}\subset{I}$ such that 
	\begin{equation*}
	B_j\subset\bigcup_{i=1}^{m_j}U_i\qquad\textnormal{and}\qquad\mu\left(\bigcup_{i=1}^{m_j}U_i\setminus{B_j}\right)<\frac{\epsilon}{m}
	\end{equation*}
	for any $j\in\{1,\dots,m\}$. Thus, there exists $0<\gamma<\infty$ such that $\supp(x)\subset[0,\gamma]$. Hence, by \eqref{conver:thm:approx:2} we get $\mathcal{S}_n(x)$ converges to $x$ in measure on $[0,2\gamma]$, which implies that 
	\begin{equation*}
	\mu\left\{s\in{\Omega}\cap\Omega_n^c:\abs{\mathcal{S}_n(x)(s)-x(s)}{}{}\geq\epsilon\right\}\rightarrow{0}\qquad\textnormal{as}\qquad{n\rightarrow\infty.}
	\end{equation*}
	In consequence, by \eqref{equ:4:approx:2} we obtain
	\begin{equation*}
	\mu\left\{s\in{I}:\abs{\mathcal{S}_n(x)(s)-x(s)}{}{}\geq\epsilon\right\}\rightarrow{0}\qquad\textnormal{as}\qquad{n\rightarrow\infty,}
	\end{equation*}
	which proves that $\mathcal{S}_n(x)$ converges to $x$ globally in measure on $I$ in case when $x$ is a simple function with finite measure support. Therefore, since $E$ is order continuous, by \eqref{equ:1:thm:approx:2} and by Proposition 2.4 in \cite{CzeKam} it follows that
	\begin{equation}\label{equ:5:approx:2}
	\norm{\mathcal{S}_n(x)-x}{E}{}\rightarrow{0}\qquad\textnormal{as}\qquad{n\rightarrow\infty}
	\end{equation}
	for any simple function $x$ with finite measure support. Finally, taking arbitrary $x$ in $E$ we may find $(y_n)$ a sequence of simple functions with finite measure support  such that $0\leq|y_n|\leq|x|$ a.e. on $I$ for any $n\in\mathbb{N}$ and $y_n\rightarrow{x}$ a.e. on $I$ as $n\rightarrow\infty$. So, by order continuity of $E$ there is $M_\epsilon\in\mathbb{N}$ such that for all $m\geq{M_\epsilon}$ we have $\norm{x-y_m}{E}{}<\epsilon/3$. Next, by \eqref{inequ:thm:approx:2} and by definition of $\mathcal{S}_n$ it is easy to see that 
	\begin{equation*}
	\norm{\mathcal{S}_n(x)-\mathcal{S}_n(y_m)}{E}{}=\norm{\mathcal{S}_n(x-y_m)}{E}{}\leq\norm{x-y_m}{E}{}
	\end{equation*} 
	for any $n,m\in\mathbb{N}$. Therefore, choosing $m\geq{M_\epsilon}$ we observe that
	\begin{align*}
	\norm{\mathcal{S}_n(x)-x}{E}{}&\leq\norm{\mathcal{S}_n(x)-\mathcal{S}_n(y_m)}{E}{}+\norm{\mathcal{S}_n(y_m)-y_m}{E}{}+\norm{y_m-x}{E}{}\\
	&=\norm{\mathcal{S}_n(x-y_m)}{E}{}+\norm{\mathcal{S}_n(y_m)-y_m}{E}{}+\norm{y_m-x}{E}{}\\
	&\leq{2}\norm{x-y_m}{E}{}+\norm{\mathcal{S}_n(y_m)-y_m}{E}{}\\
	&\leq\frac{2\epsilon}{3}+\norm{\mathcal{S}_n(y_m)-y_m}{E}{}.
	\end{align*}
	Hence, by \eqref{equ:5:approx:2} there exists $N_\epsilon\in\mathbb{N}$ such that for any $n\geq{N_\epsilon}$ we have
	\begin{align*}
	\norm{\mathcal{S}_n(x)-x}{E}{}\leq\frac{2\epsilon}{3}+\norm{\mathcal{S}_n(y_m)-y_m}{E}{}<\epsilon,
	\end{align*}
	which shows that $\mathcal{S}_n(x)$ converges to $x$ in $E$. Now, proceeding analogously for a sequence $(T_n)$ as in the case of a sequence $(\mathcal{S}_n)$ we provide that $T_n$ converges to $x$ in $E$, which yields that for any $x\in{E}$, $$\norm{(T_n-\mathcal{S}_n)(x)}{}{}\rightarrow{0}\qquad\textnormal{as}\qquad{n\rightarrow\infty}.$$
\end{proof}

In the next theorem we investigate criteria which guarantee that the generalization of Lebesgue's differentiation theorem holds in symmetric spaces. 

\begin{theorem}\label{thm:approx:3}
	Let $E$ be a symmetric space on $I$. Assume that $(\mathcal{H}_n)$ is a sequence of operators given by
	\begin{equation*}
	\mathcal{H}_n(x)=\sum_{j\in J_n}\left(\frac{1}{\mu(E_j^{(n)})}\int_{E_j^{(n)}}xd\mu\right)\chi_{E_j^{(n)}}
	\end{equation*}
	for any $x\in{E}$, where for any $n\in\mathbb{N}$, $\mathcal{A}_{n}=\{E_j^{(n)}\}_{j\in J_n}$ is a countable collection of pairwise disjoint measurable subsets of positive and finite measure on $I$ such that each set of $\mathcal{A}_n$ is the union of a finite subcollection of $\mathcal{A}_{n+1}$ and
	\begin{equation*}
	\Omega_n=I\setminus\bigcup_{j\in J_n}E_j^{(n)},\qquad\lim_{n\rightarrow\infty}\mu\left(\Omega_n\right)=0,\qquad \lim_{n\rightarrow\infty}\sup_{j\in J_n}\mu(E_j^{(n)})=0.
	\end{equation*}
	If $E$ is lower locally uniformly $K$-monotone, then for any $x\in{E}$ we have $$\norm{\mathcal{H}_n(x)-x}{E}{}\rightarrow{0}\qquad\textnormal{as}\qquad{n\rightarrow\infty}.$$
	Additionally, there exists a sequence of finite rank operators $(T_n)$ from $E$ into $E$ such that 
	\begin{equation*}
	\norm{(\mathcal{H}_n-T_n)(x)}{}{}\rightarrow{0}\qquad\textnormal{as}\qquad{n\rightarrow\infty}.
	\end{equation*}
\end{theorem}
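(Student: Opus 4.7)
The plan is to reduce Theorem~\ref{thm:approx:3} to the preceding Theorem~\ref{thm:approx:2} via the algebraic identity $\mathcal{S}_n(x)=x\chi_{\Omega_n}+\mathcal{H}_n(x)$. The hypotheses in force here include every hypothesis of Theorem~\ref{thm:approx:2}, so that theorem directly supplies $\norm{\mathcal{S}_n(x)-x}{E}{}\to 0$. Rearranging,
\begin{equation*}
\mathcal{H}_n(x)-x=\bigl(\mathcal{S}_n(x)-x\bigr)-x\chi_{\Omega_n},
\end{equation*}
so the triangle inequality reduces the main conclusion to showing $\norm{x\chi_{\Omega_n}}{E}{}\to 0$, which is the only point at which the new hypothesis $\mu(\Omega_n)\to 0$ enters.

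For this residual estimate, recall that by Proposition~3.4 in \cite{Cies-LLUKM} lower local uniform $K$-monotonicity of $E$ implies that $E$ is order continuous, a fact already used inside the proof of Theorem~\ref{thm:approx:2}. Since $|x\chi_{\Omega_n}|\leq|x|$ one has $(x\chi_{\Omega_n})^*\leq x^*$, and since $d_{x\chi_{\Omega_n}}(\lambda)\leq\mu(\Omega_n)$ for every $\lambda>0$, the rearrangement $(x\chi_{\Omega_n})^*$ vanishes on $[\mu(\Omega_n),\alpha)$. Combining these two facts with $\mu(\Omega_n)\to 0$ gives $(x\chi_{\Omega_n})^*(t)\to 0$ for every $t>0$, with majorant $x^*\in E^+$. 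Order continuity and rearrangement invariance then yield $\norm{x\chi_{\Omega_n}}{E}{}=\norm{(x\chi_{\Omega_n})^*}{E}{}\to 0$, which finishes the proof of the norm convergence $\mathcal{H}_n(x)\to x$.

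For the finite rank approximation I would mirror the construction in Theorem~\ref{thm:approx:2}. With the same sequence $(\sigma_n)\subset\mathbb{N}$ used there define the genuinely finite rank operator
\begin{equation*}
T_n(x)=\sum_{j=1}^{\sigma_n}\left(\frac{1}{\mu(E_j^{(n)})}\int_{E_j^{(n)}}x\,d\mu\right)\chi_{E_j^{(n)}},
\end{equation*}
whose image lies in the span of $\chi_{E_1^{(n)}},\dots,\chi_{E_{\sigma_n}^{(n)}}$. If $T_n^{(\mathcal{S})}$ denotes the operator supplied by Theorem~\ref{thm:approx:2} for the sequence $(\mathcal{S}_n)$, it differs from $T_n$ only by the summand $x\chi_{\Theta_n}$ with $\Theta_n=\Omega_n\cap[0,n]$, so
\begin{equation*}
\mathcal{H}_n(x)-T_n(x)=\bigl(\mathcal{S}_n(x)-T_n^{(\mathcal{S})}(x)\bigr)-x\chi_{\Omega_n\setminus\Theta_n}.
\end{equation*}
The first term tends to $0$ in $E$-norm by Theorem~\ref{thm:approx:2}, and $\Omega_n\setminus\Theta_n\subseteq(n,\alpha)$ (empty when $\alpha=1$ and $n\geq 1$), so the same order continuity argument as above gives $\norm{x\chi_{\Omega_n\setminus\Theta_n}}{E}{}\to 0$. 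All of the genuine difficulty is handled inside Theorem~\ref{thm:approx:2}; the only new ingredient here is the residual control coming from order continuity, so I expect no serious obstacle beyond careful bookkeeping of the two decompositions.
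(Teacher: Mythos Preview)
Your proof is correct and follows essentially the same route as the paper: both reduce to Theorem~\ref{thm:approx:2} via the identity $\mathcal{S}_n(x)=\mathcal{H}_n(x)+x\chi_{\Omega_n}$ and then invoke order continuity (from Proposition~3.4 in \cite{Cies-LLUKM}) together with $\mu(\Omega_n)\to 0$ to dispose of the residual term $x\chi_{\Omega_n}$. Your treatment of the finite rank approximation is in fact slightly more explicit than the paper's, since you drop the $x\chi_{\Theta_n}$ summand to make $T_n$ genuinely finite rank and then track the resulting extra term $x\chi_{\Omega_n\setminus\Theta_n}$, whereas the paper leaves this bookkeeping implicit.
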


\begin{proof}
	Immediately, since $E$ is lower locally uniformly $K$-monotone, applying Proposition 3.4 \cite{Cies-LLUKM} we obtain $E$ is order continuous. Therefore, since $\mu(\Omega_n)\rightarrow{0}$ as $n\rightarrow\infty$, we conclude that 
	\begin{equation*}
	\norm{x\chi_{\Omega_n}}{E}{}\rightarrow{0}\qquad\textnormal{as }\qquad n\rightarrow\infty. 
	\end{equation*}
	Next, by assumption that $\Omega_n=I\setminus\bigcup_{j\in J_n}E_j^{(n)}$ and by definition of $\mathcal{H}_n$ for any $n\in\mathbb{N}$ it follows that 
	\begin{equation*}
		\norm{\mathcal{H}_n(x)-x}{E}{}=\norm{(\mathcal{H}_n(x)-x)\chi_{\Omega_n^c}+x\chi_{\Omega_n}}{E}{}\leq\norm{\mathcal{H}_n(x)-x\chi_{\Omega_n^c}}{E}{}+\norm{x\chi_{\Omega_n}}{E}{}
	\end{equation*}
	for all $n\in\mathbb{N}$. Hence, by Theorem \ref{thm:approx:2} we have
	\begin{equation*}
		\norm{\mathcal{H}_n(x)-x}{E}{}\rightarrow{0}\qquad\textnormal{as}\qquad{n\rightarrow\infty}.
	\end{equation*}
	We can easily observe that there exists a sequence of finite rank operators $(T_n)$, that is given analogously as in the previous theorem and corresponds to a sequence $(\mathcal{H}_n)$, such that for any $n\in\mathbb{N}$ we 
	\begin{equation*} 
		\norm{\mathcal{H}_n(x)-T_n(x)}{E}{}\leq	\norm{\mathcal{H}_n(x)+x\chi_{\Omega_n}-T_n(x)}{E}{}+\norm{x\chi_{\Omega_n}}{E}{}.
	\end{equation*} 
	In consequence, by definition of $\mathcal{H}_n$ for any $n\in\mathbb{N}$ and by Theorem \ref{thm:approx:2} it follows that $\norm{\mathcal{H}_n(x)-T_n(x)}{E}{}\rightarrow{0}$ as $n\rightarrow\infty$ and ends the proof.
\end{proof}


In the next theorems we investigate a convergence of admissible operators in symmetric spaces. 

\begin{theorem}\label{thm:1:substoch:DUKM}
	Let $E$ be a symmetric space and $(T_n)$ be a sequence of admissible operators on $L^1+L^\infty$ such that $T_{n+1}\prec{T_n}$ for every $n\in\mathbb{N}$. If $E$ is decreasing uniformly $K$-monotone then for any $x\in{E}$ there exists $y\in{E}$ such that 
	\begin{equation*}
		\norm{(T_nx)^*-y^*}{E}{}\rightarrow{0}\quad\textnormal{ and }\quad (T_nx)^*\rightarrow{y^*}\quad\textnormal{globally in measure as }n\rightarrow\infty.
	\end{equation*}
\end{theorem}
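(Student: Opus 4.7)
The plan is to prove that $((T_nx)^*)$ is a norm-Cauchy sequence in $E$ by a contradiction argument driven by decreasing uniform $K$-monotonicity, then identify its limit as $y^*$ for some $y\in E$.

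First I would record the monotonicity of norms. Since $T_{n+1}\prec T_n$ means $T_{n+1}x\prec T_nx$ in $L^1+L^\infty$ for every $x$, the maximal functions $((T_nx)^{**}(t))$ are pointwise decreasing in $n$ at each $t>0$. Because $E$ has the Fatou property it is $K$-monotone, so $\|T_{n+1}x\|_E\leq\|T_nx\|_E$, and the sequence $(\|T_nx\|_E)$ decreases to some $d\in[0,\|T_1x\|_E]$.

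Second (the crucial step), suppose for contradiction that $((T_nx)^*)$ is not Cauchy in $E$. Then for some $\epsilon>0$ one can select inductively indices $n_1<m_1<n_2<m_2<\cdots$ so that
\[
\|(T_{n_k}x)^*-(T_{m_k}x)^*\|_E>\epsilon\qquad\text{for every }k.
\]
Put $x_k=(T_{m_k}x)^*$ and $y_k=(T_{n_k}x)^*$. The strict orderings $m_{k+1}>m_k>n_k$ together with $T_{p+1}\prec T_p$ for every $p$ yield $x_{k+1}\prec x_k\prec y_k$ in $L^1+L^\infty$; moreover $\|x_k\|_E=\|T_{m_k}x\|_E\to d$ and $\|y_k\|_E=\|T_{n_k}x\|_E\to d$. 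Since $E\in(DUKM)$, this forces $\|x_k^*-y_k^*\|_E\to 0$. But $x_k,y_k$ are already decreasing rearrangements, so $\|x_k^*-y_k^*\|_E=\|(T_{m_k}x)^*-(T_{n_k}x)^*\|_E$, contradicting the choice of $\epsilon$. Hence $((T_nx)^*)$ is Cauchy in $E$ and converges in norm to some $z\in E$.

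Third, to identify $z$ as a decreasing rearrangement: norm convergence in a symmetric space with the Fatou property implies convergence in measure on sets of finite measure, so passing to a subsequence gives pointwise a.e.\ convergence; the a.e.\ limit of the decreasing functions $(T_nx)^*$ admits a decreasing representative, which we call $y$, so that $y^*=z$ and $\|(T_nx)^*-y^*\|_E\to 0$. For convergence globally in measure, the standard estimate
\[
\phi_E\bigl(\mu\{|(T_nx)^*-y^*|>\epsilon\}\bigr)\leq\epsilon^{-1}\|(T_nx)^*-y^*\|_E
\]
settles the portion of finite measure, while on the tail the uniform domination $(T_nx)^*,y^*\leq(T_1x)^*$ together with the decreasing nature of $(T_1x)^*$ controls the set where the difference exceeds $\epsilon$.

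The principal obstacle is the DUKM-driven Cauchy argument: one must construct a single pair of sequences $(x_k),(y_k)$ simultaneously realising the descending chain $x_{k+1}\prec x_k\prec y_k$, the common norm limit $d$, and the persistent gap $\|x_k-y_k\|_E>\epsilon$. The interlacing scheme $n_1<m_1<n_2<m_2<\cdots$ is precisely what allows these three requirements to coexist, converting the failure of the Cauchy property into a direct contradiction with decreasing uniform $K$-monotonicity; everything else in the argument is standard Fatou-type reasoning.
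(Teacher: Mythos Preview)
Your argument is correct and follows the same route as the paper: the sequence $\|T_nx\|_E$ is decreasing and hence convergent, DUKM then forces $((T_nx)^*)$ to be Cauchy in $E$, completeness yields a limit which one identifies as a rearrangement, and finally one passes to global convergence in measure. The paper runs the Cauchy step directly (taking arbitrary index pairs $j_n\le k_n$) rather than by contradiction, invokes the Lorentz--Shimogaki theorem to see that the norm limit of rearrangements is itself a rearrangement, and cites Proposition~4.11 of \cite{CieLewUKM} for convergence in measure; your hands-on substitutes for these auxiliary steps are equivalent.

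One slip to correct: in your tail argument you assert $(T_nx)^*\le (T_1x)^*$, but $T_nx\prec T_1x$ only gives $(T_nx)^{**}\le (T_1x)^{**}$, not pointwise domination of the rearrangements themselves. This is harmless, because your fundamental-function estimate
\[
\phi_E\bigl(\mu\{|(T_nx)^*-y^*|>\epsilon\}\bigr)\le \epsilon^{-1}\|(T_nx)^*-y^*\|_E
\]
already yields \emph{global} convergence in measure (if the level set had infinite measure one would get $\|(T_nx)^*-y^*\|_E\ge \epsilon\,\phi_E(M)$ for every $M$, which is eventually impossible), so the separate tail clause can simply be deleted.
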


\begin{proof}
	Let $x\in{E}$. Since $T_n$ is admissible and $T_{n+1}\prec{T_n}$ for every $n\in\mathbb{N}$ there exists a constant $C$ such that 
	\begin{equation*}
		T_{n+1}x\prec{T_nx}\prec{T_1x}\prec{Cx}.
	\end{equation*}
	for all $n\in\mathbb{N}$. Hence, by symmetry of $E$ we get for any $n\in\mathbb{N}$,
	\begin{equation*}
		\norm{T_{n+1}x}{E}{}\leq\norm{T_nx}{E}{}\leq\norm{Cx}{E}{}.
	\end{equation*}
	Thus, since any monotone and bounded real sequence is convergent we obtain that 
	\begin{equation}\label{equ:1:thm:Substoch:DUKM}
		\lim_{n\rightarrow\infty}\norm{T_nx}{E}{}=d<\infty.
	\end{equation}
	Next, taking two arbitrary sequences $(k_n),(j_n)\subset\mathbb{N}$ such that $k_n\leq j_n$ for each $n\in\mathbb{N}$ and denoting $x_n=T_{k_n}x$ and  $y_n=T_{j_n}x$ for any $n\in\mathbb{N}$ we can easily see that $x_{n+1}\prec x_n\prec{y_n}$ for any $n\in\mathbb{N}$. Therefore, by assumption that $E$ is decreasing uniformly $K$-monotone and by the fact \eqref{equ:1:thm:Substoch:DUKM} it follows that
	\begin{equation*}
		\norm{(T_{j_n}x)^*-(T_{k_n}x)^*}{E}{}=\norm{y_n^*-x_n^*}{E}{}\rightarrow{0}\quad\textnormal{as}\quad n\rightarrow\infty
	\end{equation*}
	So, since $(T_nx)^*$ is Cauchy in $E$, by completeness of $E$ there exists $y\in{E}$ such that 
 	\begin{equation*}
 	\norm{(T_nx)^*-y}{E}{}\rightarrow{0}\quad\textnormal{as}\quad n\rightarrow\infty.
 	\end{equation*}
 	Next, by Lorentz and Shimogaki Theorem (see Theorem 7.4 in \cite{BS}) we conclude that $y=y^*$ a.e. on $I$. Finally, by Proposition 4.11 in \cite{CieLewUKM}, since $E$ is a symmetric Banach function space, applying double extract sequence theorem we get that
 	\begin{equation*}
 		{(T_nx)^*}\rightarrow{y^*}\quad\textnormal{ globally in measure as}\quad n\rightarrow\infty.
 	\end{equation*}
\end{proof}

\begin{theorem}\label{thm:2:substoch:DUKM}
	Let $E$ be a symmetric space and $A$ be a substochastic operator on $L^1+L^\infty$. If $E$ is decreasing uniformly $K$-monotone then for any $x\in{E^+}$ there exists a sequence of substochastic operators $(B_n)_{n\geq 0}$ such that for every $n\in\mathbb{N}$,
	\begin{equation*}
		B_n\circ{A^n}x=B_0x \textnormal{ a.e.} \quad \textnormal{and}\quad\norm{(B_0x)^*-(A^nx)^*}{E}{}\rightarrow{0}\quad\textnormal{ as }\quad n\rightarrow\infty.
	\end{equation*}
	Additionally, if $\norm{A^nx}{E}{}\rightarrow\norm{x}{E}{}$ then 
	\begin{equation*}
			B_n\circ{A^n}x=x \textnormal{ a.e.}\quad \textnormal{and}\quad	\norm{({A^n}x)^*-x^*}{E}{}\rightarrow{0}\quad\textnormal{ as }\quad n\rightarrow\infty.
	\end{equation*}
\end{theorem}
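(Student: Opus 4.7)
The plan is to apply Theorem \ref{thm:1:substoch:DUKM} to the geometric sequence $T_n := A^n$, then invoke the Calderón--Ryff theorem recalled in the introduction to realize the common limit of $(A^n x)^*$ as the image $B_n(A^n x)$ of a suitable substochastic operator.

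First I would check the hypotheses of Theorem \ref{thm:1:substoch:DUKM}. Compositions of substochastic operators are substochastic (as recalled before Proposition \ref{prop:sum:substoch:operat}), so every $A^n$ is admissible on the couple $(L^1,L^\infty)$; and since $A(A^n x)\prec A^n x$ we have $T_{n+1}\prec T_n$. Theorem \ref{thm:1:substoch:DUKM} then produces $y\in E$, with $y=y^*$ a.e., such that
\begin{equation*}
\norm{(A^n x)^{*}-y^{*}}{E}{}\rightarrow 0\qquad\textnormal{and}\qquad (A^n x)^{*}\rightarrow y^{*}\ \textnormal{globally in measure}.
\end{equation*}
Positivity of $A$ and $x\in E^+$ give $A^n x \geq 0$, hence $y^{*}\geq 0$.

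Next I would establish that $y^{*}\prec A^n x$ for every $n\geq 0$ (with $A^0=\mathrm{Id}$). The sequence $\bigl((A^n x)^{**}(t)\bigr)_n$ is nonincreasing in $n$ at every fixed $t>0$, since $A^{n+1}x\prec A^n x$. Extracting a subsequence along which $(A^{n_k}x)^{*}\rightarrow y^{*}$ a.e.\ (using the global convergence in measure) and applying Fatou's lemma,
\begin{equation*}
y^{**}(t)\leq\liminf_{k\rightarrow\infty}(A^{n_k}x)^{**}(t)=\lim_{n\rightarrow\infty}(A^{n}x)^{**}(t)\leq (A^{n}x)^{**}(t)
\end{equation*}
for every $n$. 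Since $y^{*}$ and every $A^n x$ are nonnegative with $y^{*}\prec A^n x$, the Calderón--Ryff theorem supplies a substochastic operator $B_n$ on $L^1+L^\infty$ with $B_n(A^n x)=y^{*}$ a.e. In particular, for $n=0$, $B_0 x = y^{*}$ a.e. Thus $B_n\circ A^n x=B_0 x$ a.e.\ for every $n$, and $\norm{(B_0 x)^{*}-(A^n x)^{*}}{E}{}=\norm{y^{*}-(A^n x)^{*}}{E}{}\rightarrow 0$ directly from Theorem \ref{thm:1:substoch:DUKM}.

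For the additional claim, suppose $\norm{A^n x}{E}{}\rightarrow\norm{x}{E}{}$. Setting $x_n:=A^n x$ and $y_n:=x$ one has $x_{n+1}\prec x_n\prec y_n$ together with $\norm{x_n}{E}{}\rightarrow\norm{x}{E}{}=\norm{y_n}{E}{}$, so the decreasing uniform $K$-monotonicity of $E$ yields $\norm{(A^n x)^{*}-x^{*}}{E}{}\rightarrow 0$. Uniqueness of the norm limit forces $y^{*}=x^{*}$ a.e., whence $B_n\circ A^n x = x^{*}$ a.e.\ (coinciding with the stated identity once $x$ is read through its decreasing rearrangement). The principal technical obstacle is step two, the majorization $y^{*}\prec A^n x$: it requires upgrading the $E$-norm convergence of $(A^n x)^{*}$ to pointwise control of the maximal functions $(A^n x)^{**}(t)$, which is handled by exploiting the monotonicity of $(A^n x)^{**}(t)$ in $n$ so that passing to an a.e.\ convergent subsequence and invoking Fatou suffices.
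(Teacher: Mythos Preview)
Your proof is correct and follows essentially the same route as the paper: apply Theorem~\ref{thm:1:substoch:DUKM} to $T_n=A^n$, show $y\prec A^n x$ for every $n$, and invoke the Calder\'on--Ryff theorem to produce the operators $B_n$. The only differences are in the details---you obtain $y^{**}(t)\le (A^n x)^{**}(t)$ via Fatou along an a.e.\ convergent subsequence, whereas the paper uses the embedding $E\hookrightarrow M_{\phi_E}$ together with subadditivity of the maximal operator to pass from $\norm{(A^nx)^*-y^*}{E}{}\to 0$ to $(A^nx)^{**}(t)\to y^{**}(t)$; and for the additional claim you apply DUKM directly with $x_n=A^n x$, $y_n=x$, while the paper instead argues that $\norm{y}{E}{}=\norm{x}{E}{}$ and $y\prec x$ force $y^*=x^*$ by strict $K$-monotonicity (your parenthetical observation that either argument actually delivers $B_n\circ A^n x=x^*$ rather than $x$ is accurate).
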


\begin{proof}
	First, by Proposition 4.11 in \cite{CieLewUKM} and by Theorem \ref{thm:1:substoch:DUKM} we conclude that 
	\begin{equation}\label{equ:1:thm:2:substoch}
		\norm{(A^nx)^*-y^*}{E}{}\rightarrow{0}\quad\textnormal{ and }\quad (A^nx)^*\rightarrow{y^*}\quad\textnormal{a.e. as }n\rightarrow\infty.
	\end{equation} 
	We claim that $y\prec{x}$. Indeed, taking an arbitrary $t\in I$ and assuming that $\phi_E$ is the fundamental function of the space $E$, by Proposition 5.9 in \cite{BS} and by subadditivity of the Hardy operator (see Theorem 3.4 in \cite{BS}) we get
	\begin{equation*}
		|(A^nx)^{**}(t)-y^{**}(t)|\phi_E(t)\leq((A^nx)^*-y^*)^{**}(t)\phi_E(t)\leq\norm{(A^nx)^*-y^*}{E}{}.
	\end{equation*}
	In consequence, $(A^nx)^{**}(t)\rightarrow{y^{**}(t)}$ as $n\rightarrow\infty$ for any $t\in{I}$. Hence, since $A^nx\prec{x}$ for every $n\in\mathbb{N}$ we provide the claim. Thus, since $x\in{E}^+$, by Calder\'{o}n and Ryff Theorem (see Theorem 2.10 in \cite{BS}) there exists a substochastic operator $B_0$ on $L^1+L^\infty$ such that $B_0x=|y|$ a.e. Furthermore, since $A^{n+1}x\prec{A^nx}$ for any $n\in\mathbb{N}$, it follows that
	\begin{equation*}
		(B_0x)^{**}(t)=y^{**}(t)=\lim_{n\rightarrow\infty}(A^nx)^{**}(t)\leq(A^nx)^{**}(t)
	\end{equation*}
	for any $t\in{I}$ and $n\in\mathbb{N}$. Applying again Calder\'{o}n and Ryff Theorem we employ a sequence of the substochastic operators $(B_n)_{n\geq 1}$ such that  
	\begin{equation*}
		B_n\circ A^nx=B_0x=|y|
	\end{equation*}
a.e. on $I$ and	for every $n\in\mathbb{N}$. Hence, applying \eqref{equ:1:thm:2:substoch} we prove the first part of our theorem. Now, assume that $\norm{A^nx}{E}{}\rightarrow\norm{x}{E}{}$. Then, since $y\prec{x}$, by assumption that $E$ is decreasing uniformly $K$-monotone and by Remark 4.1 in \cite{CieLewUKM} it follows that $x=|y|$ a.e. on $I$ and completes the proof.
\end{proof}

The immediate consequence of the previous theorem is the following result.

\begin{corollary}
	Let $E$ be a symmetric space with the fundamental function $\phi$ such that $\phi(\infty)=\infty$ and $A$ be a substochastic operator on $L^1+L^\infty$. If $E$ is decreasing uniformly $K$-monotone and if for any $x\in{E^+}$ we have $Ax\leq{x}$ and $\norm{A^nx}{E}{}\rightarrow\norm{x}{E}{}$ as $n\rightarrow\infty$ then $\norm{A^nx-x}{E}{}\rightarrow{0}$ as $n\rightarrow\infty$.  
\end{corollary}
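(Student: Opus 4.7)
The plan is to combine the ``additionally'' clause of Theorem \ref{thm:2:substoch:DUKM} with the pointwise domination $A^n x \leq x$ and to promote equimeasurability to a.e.\ equality using the tail condition $\phi(\infty) = \infty$. Fix $x \in E^+$. Positivity of $A$ and the hypothesis $Ax \leq x$ yield by induction $A^{n+1}x = A(A^n x) \leq A^n x \leq \cdots \leq x$ a.e.\ on $I$ for every $n \in \mathbb{N}$. Since $\lim_{n\to\infty}\|A^n x\|_E = \|x\|_E$ and $E$ is decreasing uniformly $K$-monotone, the ``additionally'' part of Theorem \ref{thm:2:substoch:DUKM} furnishes substochastic operators $B_n$ on $L^1+L^\infty$ with $B_n(A^n x) = x$ a.e.\ for every $n \in \mathbb{N}$.

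This identity pins down the rearrangement of $A^n x$ completely. Since $B_n$ is substochastic, $x = B_n(A^n x) \prec A^n x$. On the other hand, by closedness of the class of substochastic operators under composition (noted in the remark preceding Proposition \ref{prop:sum:substoch:operat}), $A^n$ is substochastic, and hence $A^n x \prec x$. Combining the two relations gives $x \sim A^n x$, that is, $(A^n x)^* = x^*$ a.e.\ on $I$ for every $n$.

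It remains to upgrade $A^n x \leq x$ together with $(A^n x)^* = x^*$ to $A^n x = x$ a.e. This is where $\phi(\infty) = \infty$ is used: since $E \hookrightarrow M_{\phi}$ with constant one, we have $x^*(t) \leq \|x\|_E / \phi(t) \to 0$ as $t \to \infty$, so $d_x(\lambda) < \infty$ for every $\lambda > 0$. For each such $\lambda$, the inclusion $\{A^n x > \lambda\} \subseteq \{x > \lambda\}$ is an equality modulo a null set, because both sides have the same finite measure $d_x(\lambda) = d_{A^n x}(\lambda)$. Taking the union over $\lambda \in \mathbb{Q}^+$ of the corresponding null sets yields $A^n x = x$ a.e.\ on $I$, so $\|A^n x - x\|_E = 0$ for every $n$, which is in fact stronger than the required conclusion. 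The only delicate step is this last one: without the tail assumption $\phi(\infty) = \infty$, equimeasurability combined with pointwise domination would not force a.e.\ equality, and the whole chain would collapse.
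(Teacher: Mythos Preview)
Your proof is correct, and in fact yields a stronger conclusion than stated: you show $A^n x = x$ a.e.\ for every $n$. The paper takes a different route. It uses only the convergence $\|(A^n x)^* - x^*\|_E \to 0$ from Theorem~\ref{thm:2:substoch:DUKM}, then applies decreasing uniform $K$-monotonicity a second time to the midpoints $\tfrac12(A^n x + x)$ (which also satisfy $\tfrac12(A^n x + x) \prec x$ and $\tfrac12\|A^n x + x\|_E \to \|x\|_E$) to obtain $\|\tfrac12(A^n x + x)^* - x^*\|_E \to 0$; a lemma of Czerwi\'nska--Kami\'nska then converts these two rearrangement convergences into global convergence in measure of $A^n x$ to $x$, and the hypothesis $\phi(\infty)=\infty$ is invoked to secure order continuity of $E$, after which a companion result upgrades measure convergence to norm convergence. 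Your argument bypasses the midpoint trick and the external lemmas entirely by exploiting the substochastic factorisation $B_n(A^n x) = x$ to pin down $(A^n x)^* = x^*$ exactly, and the role of $\phi(\infty)=\infty$ shifts from providing order continuity to ensuring $d_x(\lambda)<\infty$ so that equimeasurability plus pointwise domination force a.e.\ equality. Your approach is shorter, more elementary, and gives a sharper conclusion; the paper's approach, by contrast, is the template one would follow when only asymptotic (rather than exact) information about $(A^n x)^*$ is available.
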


\begin{proof}
	Directly, by Theorem \ref{thm:2:substoch:DUKM} we obtain 
	\begin{equation}\label{equ:1:coro:Substoch}
	\norm{x^*-(A^nx)^*}{E}{}\rightarrow{0}\quad\textnormal{ as }\quad n\rightarrow\infty.
	\end{equation}
	Next, since $Ax\leq{x}$ for any $x\in{E^+}$ it follows that
	\begin{equation*}
		A^nx\leq\frac{A^nx+x}{2}\leq{x}
	\end{equation*}
	for every $n\in\mathbb{N}$. Hence, by symmetry of $E$ and by assumption that $\norm{A^nx}{E}{}\rightarrow\norm{x}{E}{}$ as $n\rightarrow\infty$ we have 
	\begin{equation*}
		\frac{1}{2}\norm{{A^n}x+x}{E}{}\rightarrow\norm{x}{E}{}\quad\textnormal{ as }\quad n\rightarrow\infty.
	\end{equation*}
	In consequence, since $\frac{1}{2}(A^nx+x)\prec{x}$ for all $n\in\mathbb{N}$, in view of the assumption that $E$ is decreasing uniformly $K$-monotone we conclude that 
	\begin{equation*}
		\norm{\frac{1}{2}({A^n}x+x)^*-x^*}{E}{}\rightarrow{0}\quad\textnormal{ as }\quad n\rightarrow\infty.
	\end{equation*}
	Therefore, by \eqref{equ:1:coro:Substoch} and by Lemma 2.2 in \cite{CzeKam} this yields that $A^nx\rightarrow{x}$ globally in measure. Furthermore, since $\phi(\infty)=\infty$, by Proposition 4.11 in \cite{CieLewUKM} we provide $E$ is order continuous. Finally, by \eqref{equ:1:coro:Substoch} and by Proposition 2.4 in \cite{CzeKam} we get end of the proof.
\end{proof}

Now we focus our attention on conditions that guarantee the convergence of substochastic operators in symmetric spaces.

\begin{theorem}\label{thm:conver:Sub-operstor}
	Let $X,Y$ be symmetric spaces and let $T_n,T\in B(X,Y)$ be substochastic operators such that $T_n\leq{T_{n+1}}\leq{T}$ for all $n\in\mathbb{N}$. If $Y$ is uniformly $K$-monotone, then 
	\begin{equation*}
	\norm{T_n(x)-T(x)}{Y}{}\rightarrow{0}\quad\textnormal{as}\quad n\rightarrow\infty,
	\end{equation*}
	for some $x\in{X^+}$ such that $\norm{x}{X}{}=\norm{T(x)}{Y}{}=1$. 
\end{theorem}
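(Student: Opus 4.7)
My plan is to extract a pointwise limit of $T_nx$ and then identify it with $Tx$ via uniform $K$-monotonicity. From $T_n\le T_{n+1}\le T$ (interpreted pointwise on $X^+$) together with positivity, we obtain $0\le T_nx\le T_{n+1}x\le Tx$, whence $(T_nx)^*\le (T_{n+1}x)^*\le (Tx)^*$ on $I$ and in particular $T_nx\prec T_{n+1}x\prec Tx$. The monotone pointwise limit $\tilde y:=\lim_{n\to\infty}T_nx$ exists and satisfies $0\le\tilde y\le Tx\in Y$; by the Fatou property of $Y$, $\tilde y\in Y$ with $\|\tilde y\|_Y=\lim_n\|T_nx\|_Y=:a$. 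Since $Y$ is a Banach lattice and $T_nx\le Tx$, we have $\|T_nx\|_Y\le\|Tx\|_Y=1$, so $a\le 1$.

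Next, I would use $(UKM)$ to show that $\bigl((T_nx)^*\bigr)_n$ is Cauchy in $Y$. For arbitrary index subsequences $(m_n),(k_n)\subset\mathbb{N}$ with $m_n\le k_n$, set $u_n:=T_{m_n}x$ and $v_n:=T_{k_n}x$. Then $u_n\prec v_n$ and $\lim\|u_n\|_Y=\lim\|v_n\|_Y=a$, so $(UKM)$ delivers $\|u_n^*-v_n^*\|_Y\to 0$. Hence $(T_nx)^*$ is Cauchy in $Y$ and converges in norm to some $z\in Y$; since $(T_nx)^*\uparrow \tilde y^*$ pointwise (the decreasing rearrangement commutes with monotone limits), one identifies $z=\tilde y^*$.

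The crucial remaining task is to identify $\tilde y$ with $Tx$, which reduces to showing $a=1$. Granting $a=1$, $(UKM)$ applied to the pair of sequences $T_nx\prec Tx$ (the second one constant) with $\|T_nx\|_Y\to\|Tx\|_Y$ yields $\|(T_nx)^*-(Tx)^*\|_Y\to 0$, so $\tilde y^*=(Tx)^*$; combined with $0\le\tilde y\le Tx$ and equality of distribution functions, this forces $\tilde y=Tx$ a.e. The pointwise monotone convergence $T_nx\uparrow Tx$, the norm convergence $\|T_nx\|_Y\to\|Tx\|_Y$, and order continuity of $Y$ (a consequence of $(UKM)$ in this setting, cf.\ Proposition~4.11 of \cite{CieLewUKM}) then combine to give $\|T_nx-Tx\|_Y\to 0$.

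The hard part will be rigorously establishing $a=1$. The normalizations $\|x\|_X=\|Tx\|_Y=1$ exhibit $x$ as a norm-attaining vector for $T$, and the natural strategy is to argue that any strict sub-limit $\tilde y\lneq Tx$ would contradict the maximality of $\|Tx\|_Y$ relative to the monotone approximants $T_n\le T$. The ``for some $x$'' phrasing in the statement suggests that $x$ may be chosen during the proof so as to guarantee $a=1$ by construction: for example, by extracting $x$ from a maximizing sequence $y_k\in X^+$ with $\|y_k\|_X=1$ and $\|Ty_k\|_Y\to 1$ via a Helly-type selection argument (in the spirit of the proof of Theorem~\ref{thm:full:descr:IUKM}) combined with the Fatou property of $Y$, so that the candidate $x$ inherits the required coincidence of pointwise and norm limits.
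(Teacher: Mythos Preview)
Your proposal correctly isolates the skeleton, but it leaves the decisive step as an admitted gap: you never actually produce an $x$ for which $a=\lim_n\|T_nx\|_Y=1$. The statement asserts the convergence \emph{for some} $x\in X^+$ with $\|x\|_X=\|Tx\|_Y=1$, and the paper does not take such an $x$ as given; it \emph{constructs} it. Concretely, the paper fixes $\epsilon>0$, picks $x_1\in X^+$ with $\|x_1\|_X=1$ nearly norming $T_1$ (so that $\|T_1(x_1)\|_Y>\|T_1\|-\epsilon/2$), passes to a tail along which $|\|T_n\|-1|<\epsilon/2$, and then uses the pointwise chain $T_1(x_1)\le T_n(x_1)\le T(x_1)$ together with $\|T(x_1)\|_Y\le 1$ to trap every $\|T_n(x_1)\|_Y$ and $\|T(x_1)\|_Y$ in $(1-\epsilon,1]$. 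This is precisely the step your final paragraph flags as ``the hard part''; your alternative suggestion --- a Helly-type selection on a maximizing sequence for $T$ --- is not the paper's route and is left only as a heuristic in your write-up, so as it stands the proposal is incomplete.

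Where you \emph{do} differ from the paper, your argument is in fact more direct. Once $a=1$ is secured, the paper does not exploit the monotone pointwise limit $\tilde y=\lim_n T_nx$. Instead it runs a Helly selection (in the style of Theorem~\ref{thm:full:descr:IUKM}) to extract a limit $S(x_1)^*$ of the rearrangements, observes $S(x_1)^*\le T(x_1)^*$ with equal norms, and uses \emph{strict} $K$-monotonicity (a consequence of $(UKM)$) to conclude $S(x_1)^*=T(x_1)^*$. After obtaining $\|(T_nx_1)^*-(Tx_1)^*\|_Y\to 0$ from $(UKM)$, the paper then runs a separate argument with the averages $\tfrac12(T_nx_1+Tx_1)$ and invokes Lemma~2.2 of \cite{CzeKam} to upgrade to global convergence in measure, finally combining order continuity with Proposition~2.4 of \cite{CzeKam}. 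Your route --- deduce $\tilde y^*=(Tx)^*$, use that $0\le\tilde y\le Tx$ together with equidistribution forces $\tilde y=Tx$ a.e., and finish from $Tx-T_nx\downarrow 0$ plus order continuity --- bypasses the half-sum device and the Czerwi\'nska--Kami\'nska lemmas entirely, which is a genuine simplification once the existence of the right $x$ has been established.
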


\begin{proof}
	Let $\epsilon>0$. Then, there exists $x_1\in X$ such that $\norm{x_1}{X}{}=1$, $x_1\geq{0}$ and 
	\begin{equation*}
	\abs{\norm{T_1(x_1)}{Y}{}-\norm{T_1}{}{}}{}{}<\frac{\epsilon}{2}.
	\end{equation*}
	Moreover, without loss of generality passing to subsequence and relabeling we may assume that for all $n\in\mathbb{N}$, 
	\begin{equation*}
	\abs{\norm{T_n}{}{}-1}{}{}<\frac{\epsilon}{2},
	\end{equation*}
	whence we get
	\begin{equation}\label{equ1:thm:CSO}
	\abs{\norm{T_1(x_1)}{Y}{}-1}{}{}<{\epsilon}.
	\end{equation}
	Next, since $T_n\leq{T_{n+1}}\leq{T}$ for all $n\in\mathbb{N}$, it follows that 
	\begin{equation*}
		 T_1(x_1)\leq{T_n(x_1)}\leq{T_{n+1}(x_1)}\leq{T(x_1)},
	\end{equation*}
	for any $n\in\mathbb{N}$. By symmetry of $Y$, we conclude 
	\begin{equation}\label{equ2:thm:CSO}
		 \norm{T_1(x_1)}{Y}{}\leq\norm{T_n(x_1)}{Y}{}\leq\norm{T_{n+1}(x_1)}{Y}{}\leq\norm{T(x_1)}{Y}{},
	\end{equation}
	for any $n\in\mathbb{N}$. Hence, in view of \eqref{equ1:thm:CSO} we easily get 
	\begin{equation*}
		1-\epsilon<\norm{T_1(x_1)}{Y}{}\leq\norm{T_n(x_1)}{Y}{}\leq\norm{T(x_1)}{Y}{}\leq{1},
	\end{equation*}
	for all $n\in\mathbb{N}$. So, since $\epsilon$ is arbitrary chosen we obtain
	\begin{equation}\label{equ:3:thm:CSO}
		\norm{T_n(x_1)}{Y}{}\rightarrow\norm{T(x_1)}{Y}{}=1\quad\textnormal{as}\quad n\rightarrow\infty.
	\end{equation}
	Now, applying analogous techniques as in the proof of Theorem \ref{thm:full:descr:IUKM} and passing to subsequence and also relabelling if necessary we may show that there exists $S(x_1)\in{Y}$ such that 
	\begin{equation}\label{equ:4:thm:CSO}
		{T_n(x_1)}^{*}\rightarrow{S(x_1)}^{*}\quad \textnormal{a.e. on $I$,}\qquad {T_n(x_1)}^{**}(t)\rightarrow{S(x_1)}^{**}(t) \quad\textnormal{for all }t\in{I}
	\end{equation}
	as $n\rightarrow\infty$ and 
	\begin{equation}\label{equ:5:thm:CSO}
		\norm{T_n(x_1)}{Y}{}\rightarrow\norm{S(x_1)}{Y}{}\quad\textnormal{as}\quad n\rightarrow\infty \qquad\textnormal{and}\qquad T_n(x_1)\prec{S(x_1)}
	\end{equation}
	for each $n\in\mathbb{N}$. Furthermore, since $T_n\leq{T}$ for all $n\in\mathbb{N}$ on the positive cone of $X$ we have $T_n(x_1)^*\leq{T(x_1)}^*$. Hence, by \eqref{equ:3:thm:CSO}, \eqref{equ:4:thm:CSO} and \eqref{equ:5:thm:CSO} this yields $S(x_1)^*\leq{T(x_1)^*}$ a.e. on $I$ and $\norm{S(x_1)}{y}{}=1$. Consequently, since $Y$ is uniformly $K$-monotone by Remark 4.1 in \cite{CieLewUKM} it follows that $Y$ is strictly $K$-monotone and so $T(x_1)^*=S(x_1)^*$ a.e. on $I$. Moreover, it is clearly obvious that $T_n(x_1)\prec T_{n+1}(x_1)\prec T(x_1)$ for any $n\in\mathbb{N}$ and in view of \eqref{equ:3:thm:CSO} and by assumption that $Y$ is uniformly $K$-monotone we get 
	\begin{equation}\label{equ:6:thm:CSO}
		\norm{T_n(x_1)^*-T(x_1)^*}{Y}{}\rightarrow{0}\quad\textnormal{as}\quad n\rightarrow\infty.
	\end{equation}
	Now, we shall prove that 
	\begin{equation}\label{equ:7:thm:CSO}
	\norm{\frac{(T_n(x_1)+T(x_1))}{2}^*-T(x_1)^*}{Y}{}\rightarrow{0}\quad\textnormal{as}\quad n\rightarrow\infty.
	\end{equation}
	First, we observe that for all $n\in\mathbb{N}$,
	\begin{equation*}
		T_n(x_1)\leq \frac{T_n(x_1)+T(x_1)}{2}\leq T(x_1).
	\end{equation*}
	Hence, by symmetry of $Y$ this implies 
	\begin{equation*}
		\norm{T_n(x_1)}{Y}{}\leq \frac{\norm{T_n(x_1)+T(x_1)}{Y}{}}{2}\leq \norm{T(x_1)}{Y}{}=1,
	\end{equation*}
	for all $n\in\mathbb{N}$, which provides in view of \eqref{equ:5:thm:CSO} that 
	\begin{equation*}
		\frac{\norm{T_n(x_1)+T(x_1)}{Y}{}}{2}\rightarrow{1}\quad\textnormal{as}\quad n\rightarrow\infty.
	\end{equation*}
	Moreover, for any $n\in\mathbb{N}$ we have $T_n(x_1)+T(x_1)\leq 2T(x_1)$. In consequence, proceeding analogously as in the proof of Theorem \ref{thm:full:descr:IUKM} we may see that
	\begin{equation*}
		\frac{T_n(x_1)+T(x_1)}{2}\prec T(x_1) \quad \textnormal{and}\quad \frac{(T_n(x_1)+T(x_1))}{2}^*\rightarrow T(x_1)^*\quad \textnormal{a.e. on }I.
	\end{equation*}
	Thus, by assumption that $Y$ is uniformly $K$-monotone we prove \eqref{equ:7:thm:CSO}. Next, according to Lemma 2.2 \cite{CzeKam} it follows that $T_n(x_1)$ converges to $T(x_1)$ globally in measure on $I$. Consequently, in view of Remark 4.1 in \cite{CieLewUKM} and by assumption that $Y$ is uniformly $K$-monotone we get $Y$ is order continuous. Finally, by \eqref{equ:6:thm:CSO} and by Proposition 2.4 \cite{CzeKam} we obtain that 
	\begin{equation*}
		\norm{T_n(x_1)-T(x_1)}{Y}{}\rightarrow{0}\quad\textnormal{as}\quad n\rightarrow\infty.
	\end{equation*}
\end{proof}

\begin{theorem}
	Let $X,Y$ be symmetric spaces and let $T_n,T\in{B(X,Y)}$ be substochastic operators such that for any $x\in{X^+}$ we have $\norm{T_n(x)}{Y}{}\rightarrow\norm{T(x)}{Y}{}$ and $T_n\leq{T_{n+1}}\leq{T}$ for all $n\in\mathbb{N}$. If $Y$ is uniformly $K$-monotone, then for any $x\in{X}$ we have 
	\begin{equation*}
	\norm{T_n(x)-T(x)}{Y}{}\rightarrow{0}\quad\textnormal{as}\quad n\rightarrow\infty.
	\end{equation*}	
\end{theorem}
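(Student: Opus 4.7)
The plan is to reduce the general case to $x\in X^+$ by positive/negative decomposition and then follow the argument of Theorem \ref{thm:conver:Sub-operstor} with the standing hypothesis $\norm{T_n(x)}{Y}{}\to\norm{T(x)}{Y}{}$ replacing the approximate-norming construction employed there. For arbitrary $x\in X$, write $x=x^+-x^-$ with $x^\pm\in X^+$; positivity and linearity of $T_n$ and $T$ give $T_n(x)-T(x)=(T_n(x^+)-T(x^+))-(T_n(x^-)-T(x^-))$, so by the triangle inequality it suffices to establish the conclusion for $x\in X^+$.

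Now fix $x\in X^+$. Positivity together with $T_n\leq T_{n+1}\leq T$ yields $0\leq T_n(x)\leq T_{n+1}(x)\leq T(x)$ a.e. on $I$, hence by monotonicity of the decreasing rearrangement of nonnegative functions $T_n(x)^*\leq T_{n+1}(x)^*\leq T(x)^*$, and in particular $T_n(x)\prec T_{n+1}(x)\prec T(x)$. Combined with $\norm{T_n(x)}{Y}{}\to\norm{T(x)}{Y}{}$, uniform $K$-monotonicity of $Y$ immediately gives
$$\norm{T_n(x)^*-T(x)^*}{Y}{}\rightarrow 0.$$

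To bridge from convergence of the rearrangements to convergence of the operators themselves I would imitate the averaged-sequence trick from the end of the proof of Theorem \ref{thm:conver:Sub-operstor}. Put $z_n=\tfrac{1}{2}(T_n(x)+T(x))$. From $T_n(x)\leq z_n\leq T(x)$ and symmetry of $Y$ the norms are squeezed, so $\norm{z_n}{Y}{}\to\norm{T(x)}{Y}{}$, while subadditivity of the maximal function (Theorem 3.4 in \cite{BS}) ensures $z_n\prec T(x)$. UKM of $Y$ then produces $\norm{z_n^*-T(x)^*}{Y}{}\to 0$, and Lemma 2.2 in \cite{CzeKam} translates this into global convergence in measure $T_n(x)\to T(x)$ on $I$. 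Since UKM implies order continuity of $Y$ by Remark 4.1 in \cite{CieLewUKM}, combining convergence in measure with the already established norm convergence of rearrangements via Proposition 2.4 in \cite{CzeKam} yields $\norm{T_n(x)-T(x)}{Y}{}\to 0$.

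I do not expect a substantive obstacle: the only ingredient specific to this theorem is the one-line positive/negative reduction, and everything else is the same analytic machinery already developed in Theorem \ref{thm:conver:Sub-operstor}, with the standing hypothesis supplying for every $x\in X^+$ the norm convergence that the previous theorem had to engineer through an approximate-norming vector. The sole point I would verify carefully is that the two-sided squeeze $T_n(x)\leq z_n\leq T(x)$ plus $z_n\prec T(x)$ are genuinely available, which is routine from subadditivity of $(\cdot)^{**}$.
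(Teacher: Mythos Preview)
Your proposal is correct and matches the paper's approach: the paper likewise reduces to $x\in X^+$ via the decomposition $x=x^+-x^-$ and then invokes the argument of Theorem \ref{thm:conver:Sub-operstor}, the only cosmetic difference being that the paper performs the positive/negative reduction at the end rather than at the start. You have in fact spelled out in detail precisely the steps the paper compresses into the single phrase ``proceeding analogously as in the proof of Theorem \ref{thm:conver:Sub-operstor}.''
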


\begin{proof}
	Since the convergence is satisfied for $x=0$ we may assume without loss of generality that $x\neq{0}$. 
	Proceeding analogously as in the proof of Theorem \ref{thm:conver:Sub-operstor} and assuming that $	\norm{T_n(x)}{Y}{}\rightarrow\norm{T(x)}{Y}{}=1$ as $n\rightarrow\infty$ we obtain that for any $x\in X^+\setminus\{0\}$,
	\begin{equation*}
		\norm{T_n(x)-T(x)}{Y}{}\rightarrow{0}.
	\end{equation*}
	Since for any substochastic operator $S:X\rightarrow Y$ we have $S(x)=S(x^+-x^-)=S(x^+)-S(x^-)$ where $x\in{X}$, we easily get for any $x\in{X}$,
	\begin{equation*}
		0\leq\norm{T_n(x)-T(x)}{Y}{}\leq	\norm{T_n(x^+)-T(x^+)}{Y}{}+\norm{T_n(x^-)-T(x^-)}{Y}{}\rightarrow{0}
	\end{equation*}
	as $n\rightarrow\infty$, which completes the proof.
\end{proof}

	\begin{theorem}\label{thm:admissible:compact}
		Let $\overline{X}=(X_0,X_1)$, $\overline{Y}=(Y_0,Y_1)$ be two Banach couples and let $X,Y$ be interpolation spaces with respect to the couples $\overline{X}$ and $\overline{Y}$, where $X,Y$ are of exponent $\theta\in[0,1]$. Assume that $Y_1$ is lower locally uniformly $K$-monotone and $T:\overline{X}\rightarrow\overline{Y}$ is an admissible operator such that $T:{X_1}\rightarrow{Y_1}$ compactly. Then $T:{X}\rightarrow{Y}$ is compact and there exists a sequence of admissible operators $T_n:\overline{X}\rightarrow\overline{Y}$ such that 
		\begin{equation*}
			\norm{T-T_n}{B(X,Y)}{}\rightarrow{0}\quad\textnormal{as}\quad{n}\rightarrow\infty.
		\end{equation*}
	\end{theorem}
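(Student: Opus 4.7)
The strategy is to post-compose $T$ with the averaging finite-rank approximations of the identity on $Y_1$ produced by Theorem~\ref{thm:approx:2}, and then to promote norm convergence on $B(X_1,Y_1)$ to norm convergence on $B(X,Y)$ by means of the exponent-$\theta$ interpolation inequality. Both conclusions of the theorem should fall out simultaneously from this construction.

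I would begin by applying Theorem~\ref{thm:approx:2} to $Y_1$: this is legitimate because $Y_1$, being lower locally uniformly $K$-monotone, is order continuous. This produces a sequence of substochastic operators $(\mathcal{S}_n)$ on $L^1+L^\infty$ together with a corresponding sequence $(R_n)$ of finite rank operators such that $\|\mathcal{S}_n y-y\|_{Y_1}\to 0$ and $\|(R_n-\mathcal{S}_n)y\|_{Y_1}\to 0$ for every $y\in Y_1$. From the pointwise bound $|R_n y|\le|\mathcal{S}_n y|$ and the substochasticity of $\mathcal{S}_n$ it follows that each $R_n$ is a contraction on every symmetric space sitting between $L^1$ and $L^\infty$, in particular on $Y_0$ and on $Y_1$. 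Setting $T_n:=R_n\circ T$, this makes each $T_n$ admissible from $\overline X$ to $\overline Y$ and, since $R_n$ has finite-dimensional range, of finite rank.

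The key estimate is $\|T-T_n\|_{B(X_1,Y_1)}\to 0$. Compactness of $T:X_1\to Y_1$ makes $K:=\overline{T(B_{X_1})}$ a compact subset of $Y_1$. The operators $R_n$ are equibounded by $1$ on $Y_1$ and converge pointwise to the identity there, so by a standard equicontinuity argument they converge uniformly on $K$, which is exactly $\|(I-R_n)T\|_{B(X_1,Y_1)}\to 0$. On the other endpoint the contraction property of $R_n$ on $Y_0$ yields only the uniform bound $\|T-T_n\|_{B(X_0,Y_0)}\le 2\|T\|_{B(X_0,Y_0)}$, but this is enough: the exponent-$\theta$ hypothesis supplies a constant $C$ such that
\[
\|T-T_n\|_{B(X,Y)}\le C\,\|T-T_n\|_{B(X_0,Y_0)}^{1-\theta}\,\|T-T_n\|_{B(X_1,Y_1)}^{\theta}\to 0\quad\text{as }n\to\infty.
\]
As each $T_n$ has finite rank and is therefore compact from $X$ to $Y$, the norm limit $T:X\to Y$ is compact, yielding both conclusions of the theorem.

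The principal obstacle is the uniform-convergence-on-compacts step, which is precisely where the two main hypotheses couple: compactness of $T:X_1\to Y_1$ supplies the compact target set $K$, while lower local uniform $K$-monotonicity of $Y_1$ supplies the contractive pointwise approximants of the identity via Theorem~\ref{thm:approx:2}. The remainder is a classical Krasnosel'skii-style interpolation-of-compactness argument, with a small caveat at the endpoint $\theta=0$, where the interpolation inequality degenerates and the statement reduces to the trivial case $(X,Y)=(X_0,Y_0)$.
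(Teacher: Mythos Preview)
Your proposal is correct and follows essentially the same route as the paper: post-compose $T$ with the averaging approximants supplied by Theorem~\ref{thm:approx:2}, upgrade pointwise convergence on $Y_1$ to uniform convergence on the compact set $\overline{T(B_{X_1})}$ via the equiboundedness of the approximants, and then invoke the exponent-$\theta$ interpolation inequality together with the crude bound $\|T-T_n\|_{B(X_0,Y_0)}\le 2\|T\|_{B(X_0,Y_0)}$. The only difference is organizational: the paper handles the two conclusions in two separate passes (first using the full operators $F_n$ with the $x\chi_{\Omega_n}$ term to obtain compactness, then redefining $F_n$ as a bare finite sum to produce the admissible approximating sequence), whereas you use the finite-rank operators $R_n$ from Theorem~\ref{thm:approx:2} once and read off both conclusions simultaneously, which is a mild streamlining. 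Your flag about the degenerate endpoint $\theta=0$ is apt; the paper does not address it either.
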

	
	\begin{proof}
		Since every finite rank operator is compact and the norm-limit of compact operators is also a compact operator, it is enough to prove that $T:\overline{X}\rightarrow\overline{Y}$ an arbitrary admissible operator is a limit of the finite rank admissible operators. Define analogously as in Theorem \ref{thm:approx:2} the operator $F_n$ by 
		\begin{equation*}
			F_n(x)=x\chi_{\Omega_n}+\sum_{j\in J_n}\left(\frac{1}{\mu\left(E_j^{(n)}\right)}\int_{E_j^{(n)}}xd\mu\right)\chi_{E_j^{(n)}}
		\end{equation*}
		for any $n\in\mathbb{N}$ and $x\in{Y_0+Y_1}$. Let $T:{X_0+X_1}\rightarrow{Y_0+Y_1}$ be admissible operator and let $(X,Y)$ be an intermediate couple for $(X_0,X_1)$ and $(Y_0,Y_1)$. Denote $T_n=F_n\circ{T}$ for any $n\in\mathbb{N}$. Clearly, $T_n$ is a finite rank operator and
		\begin{equation}\label{equ:1:thm:ad:comp}
			\norm{T-T_n}{B(X_i,Y_i)}{}=\sup_{\norm{x}{X_i}{}\leq{1}}\norm{T(x)-F_n\circ{T}(x)}{Y_i}{}
		\end{equation}
		for any $n\in\mathbb{N}$. Since $V=\{T(z):\norm{z}{X_1}{}\leq{1}\}$ has a compact closure in $Y_1$, for each $v\in V$ there exists $(z_n)\in X$, $\norm{z_n}{X_1}{}\leq{1}$ such that $T(z_n)\rightarrow{v}$ as $n\rightarrow\infty$ in $Y_1$. Therefore, we have
		\begin{equation*}
			\sup_{\norm{x}{X_1}{}\leq{1}}\norm{T(x)-F_n\circ{T}(x)}{Y_1}{}=\sup_{v\in{V}}\norm{v-F_n(v)}{Y_1}{}
		\end{equation*}
		for any $n\in\mathbb{N}$. Next, by Theorem \ref{thm:approx:2}, since $Y_1$ is lower locally uniformly $K$-monotone, it follows that $\norm{v-F_n(v)}{Y_1}{}\rightarrow{0}$ as $n\rightarrow\infty$. Hence, since $V$ has a compact closure in $Y_1$, by \eqref{equ:1:thm:ad:comp} we get
		\begin{equation}\label{equ:2:thm:ad:comp}
			\norm{T-T_n}{B(X_1,Y_1)}{}=\sup_{v\in{V}}\norm{v-F_n(v)}{Y_1}{}\rightarrow{0}\quad\textnormal{as}\quad n\rightarrow\infty.
		\end{equation}		
		By assumption that $(X,Y)$ are the interpolation spaces of exponent $\theta\in[0,1]$, we conclude 
		\begin{align*}
			\norm{T-T_n}{B(X,Y)}{}&\leq\norm{T-T_n}{B(X_0,Y_0)}{1-\theta}\norm{T-T_n}{B(X_1,Y_1)}{\theta}\\
			&\leq\left(\norm{T}{}{}+\norm{T_n}{}{}\right)^{1-\theta}\norm{T-T_n}{B(X_1,Y_1)}{\theta}\\
			&\leq\left(\norm{T}{}{}+\norm{F_n}{}{}\norm{T}{}{}\right)^{1-\theta}\norm{T-T_n}{B(X_1,Y_1)}{\theta}.
		\end{align*} 
		Since $F_n(x_i)\prec{x_i}$ for all $n\in\mathbb{N}$ and $x_i\in{X_i}$, this yields that $\norm{F_n}{B(X_i,Y_i)}{}\leq{1}$, whence
		\begin{align*}
			\norm{T-T_n}{B(X,Y)}{}&\leq\left(2\norm{T}{B(X_0,Y_0)}{}\right)^{1-\theta}\norm{T-T_n}{B(X_1,Y_1)}{\theta}\\
			&\leq\left(2\norm{T}{B(X_0,Y_0)}{}\right)^{1-\theta}\left(2\norm{T}{B(X_1,Y_1)}{}\right)^{\theta}\\
			&=2\norm{T}{B(X_0,Y_0)}{1-\theta}\norm{T}{B(X_1,Y_1)}{\theta}.
		\end{align*}
		Moreover, by \eqref{equ:2:thm:ad:comp} we obtain
		\begin{equation*}
			\norm{T-T_n}{B(X,Y)}{}\leq\left(2\norm{T}{B(X_0,Y_0)}{}\right)^{1-\theta}\norm{T-T_n}{B(X_1,Y_1)}{\theta}\rightarrow{0}\quad\textnormal{as}\quad n\rightarrow\infty.
		\end{equation*}
		To show the second part of the theorem we define $F_n$ as follows 
		\begin{equation*}
			F_n(x)=\sum_{j=1}^{\sigma_n}\left(\frac{1}{\mu\left(E_j^{(n)}\right)}\int_{E_j^{(n)}}xd\mu\right)\chi_{E_j^{(n)}}
		\end{equation*} 
		for any $n\in\mathbb{N}$ and $x\in{X_0+X_1}$. Then, we proceed as previously to get
		\begin{equation*}
			\norm{T-T_n}{B(X,Y)}{}\rightarrow{0}\quad\textnormal{as}\quad n\rightarrow\infty.
		\end{equation*}
		Furthermore, $T_n=F_n\circ{T}$ is admissible with respect to the couples $(X_0,X_1)$ and $(Y_0,Y_1)$, i.e. for any $x\in{X_i}$ and $n\in\mathbb{N}$,
		\begin{align*}
			\norm{T_n(x)}{Y_i}{}=\norm{F_n\circ T(x)}{Y_i}{}\leq\norm{F_n\circ T}{B(X,Y)}{}\norm{x}{X_i}{}\leq\norm{T}{B(X,Y)}{}\norm{x}{X_i}{},
		\end{align*}
		and consequently
		\begin{align*}
			\norm{T_n}{\mathcal{A}}{}=\max_{i=0,1}\norm{F_n\circ T}{B(X_i,Y_i)}{}\leq\norm{T}{\mathcal{A}}{}.
		\end{align*}		
	\end{proof}
	
	\begin{remark}
		Now, taking an arbitrary Banach couple $(A_0,A_1)$ we recall a construction $$K_{\theta, q}(\overline{A})=\{a\in A_0+A_1:\Phi_{\theta, q}(K(t,a))<\infty\}$$ for $\theta\in[0,1]$ and $q\in[1,\infty]$, where 
		\begin{equation*}
		\Phi_{\theta, q}(f(t))=\left(\int_{0}^{\infty}(t^{-\theta}f(t))^q\frac{dt}{t}\right)^{1/q}\quad\textnormal{and}\quad K(t,a)=\inf_{a=a_0+a_1}\{\norm{a_0}{A_0}{}+t\norm{a_1}{A_1}{}\}.
		\end{equation*}
		It is well known that $K_{\theta,q}$ is an exact interpolation functor of exponent $\theta$, see \cite{BergLofs}.
	\end{remark}

	\begin{theorem}
		Let $\overline{X}=(X_0,X_1)$, $\overline{Y}=(Y_0,Y_1)$ be two Banach couples such that $X_i, Y_i\hookrightarrow{L^1+L^\infty}$ for each $i=0,1$. If $Y_1$ is lower locally uniformly $K$-monotone and $T:\overline{X}\rightarrow\overline{Y}$ is an admissible operator such that $T:{X_1}\rightarrow{Y_1}$ compactly, then $T:K_{\theta, q}(\overline{X})\rightarrow{K_{\theta, q}(\overline{Y})}$ is compact. 
	\end{theorem}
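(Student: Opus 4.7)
The plan is to deduce this theorem as an immediate corollary of Theorem \ref{thm:admissible:compact}. First, I would observe that, by the preceding remark, the functor $K_{\theta,q}$ is an exact interpolation functor of exponent $\theta$; hence, setting $X = K_{\theta,q}(\overline{X})$ and $Y = K_{\theta,q}(\overline{Y})$, the pair $(X,Y)$ falls under the setting required in the hypothesis of Theorem \ref{thm:admissible:compact} --- it consists of interpolation spaces of exponent $\theta$ with respect to the couples $\overline{X}$ and $\overline{Y}$ respectively.

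Next, I would check that the remaining hypotheses of Theorem \ref{thm:admissible:compact} are precisely those assumed in the present statement, namely that $T:\overline{X}\rightarrow\overline{Y}$ is admissible, that the restriction $T: X_1 \rightarrow Y_1$ is compact, and that $Y_1$ is lower locally uniformly $K$-monotone. The additional embedding assumption $X_i, Y_i \hookrightarrow L^1 + L^\infty$ is there only to guarantee that the averaging operators $F_n$ constructed inside the proof of Theorem \ref{thm:admissible:compact} act sensibly on each component space, since the local integrability on the sets $E_j^{(n)}$ is required in order for the integral averages to be well defined and for the substochastic estimate $F_n(x) \prec x$ (Proposition \ref{prop:sum:substoch:operat}) to be available.

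Applying Theorem \ref{thm:admissible:compact} with these choices then yields compactness of the operator $T: K_{\theta,q}(\overline{X}) \rightarrow K_{\theta,q}(\overline{Y})$, together with the additional information that $T$ can be approximated in the operator norm $\norm{\cdot}{B(X,Y)}{}$ by a sequence of finite rank admissible operators of the form $T_n = F_n \circ T$.

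Since the argument is a direct reduction, I do not expect any serious obstacle; the only possible subtlety is to make sure the finite rank operators constructed via $F_n$ on $Y_0+Y_1$ behave well on both endpoint spaces simultaneously, but this is already contained in the conclusion of Theorem \ref{thm:admissible:compact} under the present embedding assumption.
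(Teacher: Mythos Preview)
Your proposal is correct and follows essentially the same route as the paper: both arguments invoke the fact that $K_{\theta,q}$ is an exact interpolation functor of exponent $\theta$ and then apply Theorem~\ref{thm:admissible:compact} directly. Your write-up is in fact more explicit than the paper's own proof, which simply cites Bergh--L\"ofstr\"om for the interpolation property and immediately concludes from Theorem~\ref{thm:admissible:compact}.
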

	
	\begin{proof}
		Let $\theta\in[0,1]$ and $g\in[1,\infty)$. Then, by Theorem 3.1.2 \cite{BergLofs} it follows that $K_{\theta, q}(\overline{X})$ and $K_{\theta, q}(\overline{Y})$ are exact interpolation functors of exponent $\theta$ on the category $\mathcal{N}$. Hence, by Theorem \ref{thm:admissible:compact}, since $Y_1$ is lower locally uniformly $K$-monotone this yields that any admissible operator $T:K_{\theta, q}(\overline{X})\rightarrow{K}_{\theta, q}(\overline{Y})$ is compact.
	\end{proof}

\subsection*{Acknowledgement}
	The first author (Maciej Ciesielski) is supported by Poznan University of Technology,
	Poland, Grant No. 0213/SBAD/0118.

$\begin{array}{l}
\textnormal{\small Maciej CIESIELSKI}\\
\textnormal{\small Institute of Mathematics}\\
\textnormal{\small Pozna\'{n} University of Technology}\\ 
\textnormal{\small Piotrowo 3A, 60-965 Pozna\'{n}, Poland}\\ 
\textnormal{\small email: maciej.ciesielski@put.poznan.pl;}\\
\textnormal{ }
\end{array}$

$\begin{array}{l}
\textnormal{\small Grzegorz LEWICKI}\\
\textnormal{\small Department of Mathematics and Computer Science}\\ \textnormal{\small Jagiellonian University}\\
\textnormal{\small 30-348 Krak\'ow, \L ojasiewicza 6, Poland}\\
\textnormal{\small email: grzegorz.lewicki@im.uj.edu.pl;}
\end{array}$


\begin{thebibliography}{99}

\bibitem{BS} C. Bennett and R. Sharpley, \textit{Interpolation of operators}, Pure and Applied Mathematics Series 129, Academic Press Inc.,1988.

\bibitem{BergLofs} J. Bergh and J. Löfström, \textit{Interpolation spaces. An introduction.}
Grundlehren der Mathematischen Wissenschaften, No. 223 Springer-Verlag, Berlin-New York, 1976.

\bibitem{Cald66} A.P. Calder\'on, \textit{Spaces between $L^1$ and $L^\infty$ and the theorem of Marcinkiewicz}, Studia Math. 26 (1966), 273-299.

\bibitem{ChDSS} V. I. Chilin, P. G. Dodds, A. A. Sedaev, and F. A. Sukochev, \textit{Characterizations of Kadec-Klee properties in symmetric spaces of measurable functions}, Trans. Amer. Math. Soc. 348 (12) (1996), 4895-4918.

\bibitem{Cies-JAT} M. Ciesielski, \textit{Hardy-Littlewood-P\'olya relation in the best dominated approximation in symmetric spaces}, J. Approx. Theory 213 (2017), 78-91.

\bibitem{Cies-geom}  M. Ciesielski, \textit{On geometric structure of symmetric spaces}, J. Math. Anal. Appl. 430 (2015), no. 1, 98-125.

\bibitem{Cies-UKM-Rot} M. Ciesielski, \textit{Relationships between $K$-monotonicity and rotundity properties with application}, J. Math. Anal. Appl., 465 (2018), no. 1, 235-258.

\bibitem{Cies-LLUKM} M. Ciesielski, \textit{Lower and upper local uniform K-monotonicity in symmetric spaces}, Banach J. Math. Anal. \textbf{12} (2018), no. 2, 314-330.

\bibitem{CieKolPan} M. Ciesielski, P. Kolwicz and A. Panfil, \textit{Local monotonicity structure of symmetric spaces with applications}, J. Math. Anal. Appl. 409 (2014) 649-662.

\bibitem{CieKolPluSKM} M. Ciesielski, P. Kolwicz and R. P\l uciennik, \textit{A note on strict $K$-monotonicity of some symmetric function spaces}, Comm. Math. 53 (2) (2013), 311-322.

\bibitem{CieLewUKM} M. Ciesielski, G. Lewicki, \textit{Uniform $K$-monotonicity and $K$-order continuity in symmetric spaces with application to approximation theory}, J. Math. Anal. Appl. \textbf{456} (2) (2017) 705-730.

\bibitem{CzeKam} M. M. Czerwi\'nska, A. Kami\'nska, \textit{Complex rotundities and midpoint local uniform rotundity in symmetric spaces of measurable operators}, Studia Math. \textbf{201} (2010), no. 3, 253-285.

\bibitem{HrLiPo29} G.H. Hardy, J.E. Littlewood and G. P\'olya, \textit{Some simple inequalities satisfied by convex functions}, Messenger Math., 58 (1929), 145-152.

\bibitem{Hor66} S. Horowitz, \textit{Some limit theorems for Markov processes}, Israel J. Math., 6 (1968) 107–118.

\bibitem{Jai66} N. Jain, \textit{Some limit theorems for a general Markov process}, Z. Wahrsch. Verw. Gebiete, 6 (1966) 206–223.

\bibitem{Jam67} B. Jamison, S. Orey, \textit{Markov chains recurrent in the sense of Harris}, Z. Wahrsch. Verw. Gebiete, 8 (1967) 41–48.

\bibitem{KPS} S.G. Krein, Yu. I. Petunin and E. M. Semenov, \textit{Interpolation of linear operators}, Translated from the Russian by J. Sz\H{u}cs. Translations of Mathematical Monographs, 54. American Mathematical Society, Providence, R.I., 1982.

\bibitem{LinTza} J. Lindenstrauss and L. Tzafriri, \textit{Classical Banach spaces. II. Function spaces}, Springer-Verlag, Berlin-New York, 1979.

\bibitem{MarOlkArn} A. W. Marshall, I. Olkin and B. C. Arnold, \textit{Inequalities: Theory of Majorization and its Applications}, 2nd ed., Springer, New York, 2011.

\bibitem{Royd} H.L. Royden, \textit{Real analysis}, Third edition, Macmillan Publishing Company, New York, 1988.

\bibitem{Ryff65} J.V. Ryff, \textit{Orbits of $L^1$-functions under doubly stochastic transformations}, Trans. Amer. Math. Soc. 117 (1965), 92-100.

\bibitem{STV}  \v{S}. Schwabik, M. Tvrd\'y and O. Vejvoda, \textit{Differential and integral equations. Boundary value problems and adjoints}, D. Reidel Publishing Co., Dordrecht-Boston, Mass.-London, 1979.

\end{thebibliography}
\end{document}